\let\csname equation*\endcsname\relax
\let\csname endequation*\endcsname\relax
\newcommand{\mydiv}{\textrm{div}}
\newcommand{\norm}[1]{\left\lVert#1\right\rVert}
\newcommand{\Norm}[2]{\left( \left\lVert#1\right\rVert^2 + \left\lVert#2\right\rVert^2 \right)^{1/2}\xspace}
\newcommand{\NormTri}[3]{\left( \left\lVert#1\right\rVert^2 + \left\lVert#2\right\rVert^2 + \left\lVert#3\right\rVert^2 \right)^{1/2}\xspace}
\newcommand{\myat}[1]{\left. #1 \right|}
\newtheorem{proposition}{Proposition}
\begin{document}

\title[]{PDE-constrained optimization for electroencephalographic source reconstruction}

\author{M S Malovichko$^{1}$, N B Yavich$^{1}$, A M Razorenova$^{2,3}$, N A Koshev$^2$}
\address{$^1$ Moscow Institute of Physics and Technology, Dolgoprudny, Russia}
\address{$^2$ Skolkovo Institute of Science and Technology, Moscow, Russia}
\address{$^3$ Center for Neurocognitive Research (MEG Center), Moscow State University of Psychology and Education, Moscow, Russia}
\ead{malovichko.ms@mipt.ru}

\begin{abstract}

This paper introduces a novel numerical method for the inverse problem of electroencephalography(EEG).
We pose the inverse EEG problem as an optimal control (OC) problem for Poisson's equation. 
The optimality conditions lead to a variational system of differential equations. 
It is discretized directly in finite-element spaces
leading to a system of linear equations with a sparse Karush-Kuhn-Tucker matrix.
The method uses finite-element discretization and thus can handle MRI-based meshes of almost arbitrary complexity.
It extends the well-known mixed quasi-reversibility method (mQRM) in that pointwise noisy data explicitly appear in the formulation making unnecessary tedious interpolation of the noisy data from the electrodes to the scalp surface.
The resulting algebraic problem differs considerably from that obtained in the mixed quasi-reversibility, but only slightly larger.
An interesting feature of the algorithm is that it does not require the formation of the lead-field matrix. 
Our tests, both with spherical and MRI-based meshes, demonstrates that the method accurately reconstructs cortical activity.

\end{abstract}

\date{\today}

%
%
%
%
%

\section{Introduction}
\label{sec1}

Electroencephalography (EEG) has a long history of development and
constitute one of the major modalities to study living human brain, see reviews 
\cite{Baillet_2001,Michel_2004,Soufflet_2005,Grech_2008,PascualMarqui_2009,Niedermeyer_EEG,Ahlfors_2012}.
For reliable localization of cortical activations, it is crucial to solving the inverse EEG problem, that is, to map electric potential measured at electrodes to either potential or electric current on the cortex.

Mathematically, this problem is the source reconstruction for Poisson's equation.
Depending on the formulation, it can also be considered as the Cauchy problem.
The mathematical properties of these problems have been extensively studied, and many approaches have been proposed.
We refer to monographs 
\cite{Lavrentyev,Isakov,Prilepko2000,Klibanov2000} 
as well research papers \cite{Klibanov1991,Kozlov1991,Kabanikhin1995,Cimetiere2001,Hao2018, Leonov2020} (among many others).
Because it is impossible to review all methods proposed for the task, 
we confine ourselves to those that found applications in the EEG context.

\textit{The linear distributed estimators} constitute arguably the most popular approach. 
They are used in practice with great success
e.g., \citep{Dale_1993,Fuchs_1999,Lin_2006}. 
Algorithms in this category, such as MNE-family \citep{Hamalainen_1994,Lin_2006}, LORETA-family \citep{PascualMarqui_1994,PascualMarqui_2002}, LAURA \citep{DePeraltaMenendez_2004}, FOCUSS \citep{Gorodnitsky_1995}, WROP \citep{PeraltaMenendez1997} and many others,
are closely connected to the Tikhonov regularization approach.
The forward operator is usually based on the boundary-element method (BEM), 
but the finite-element method (FEM) is catching up; see \cite{Yavich2021}, and references therein. 

Methods based on solving the Cauchy problem for Poisson's equation propagate the electric potential and, sometimes, Ohmic current inward the head starting from known values of potential and its normal derivative on the scalp.
These methods date back at least to the 1970s, under different names, such as 
\textit{the current source density}, \textit{surface Laplacian}, \textit{deblurring}, \textit{spatial deconvolution} and others 
\citep{Nicholson_1973,Nicolas_1976,Freeman_1980,Nunez_1993,Srinivasan_1996,Junghofer_1997,Tenke_2005}.
They solve the Cauchy problem for Laplace’s or Poisson's equations making severe simplifications to analytically link the potential and its normal derivatives on the scalp with that inside the head. 
These early techniques have been considered as data-enhancing procedures rather than rigorous solutions to an inverse problem. 
Today, they are still in use, sometimes as a part of more sophisticated 
algorithms \citep{Haor2017}. 

The deblurring method by \cite{Gevins_1990,Gevins_patent_1994,Gevins_patent_1996} was one of the first methods to propagate potential inward an anatomical head model. 
Remarkably, it employed a FEM on tetrahedral grids, although the BEM would dominate the field for decades. 
Numerical procedures for potential propagation based on the BEM have been proposed in \citep{He_1999, He_2002,Clerc_2007}. 
These approaches are appealing, although they have some limitations rooted in the BEM method, such as the inability to handle surface holes, 
and a high compute load for large grids.

Thus, despite the variety of mathematical approaches, the majority of source reconstruction methods currently used in EEG community are based 
on the construction and solution of a normal system of equations with the explicit use of the inverse operator.
A remarkable dispatch from this pattern has been made by L.~Bourgeois, who proposed the mixed quasi-reversibility method (QRM) in a purely mathematical context \citep{Bourgeois_2005,Bourgeois_2006}. 
The mixed QRM reduces the Cauchy problem to a variational system.
The system is discretized directly in finite-element function spaces.
Thus, reconstruction of the potential on the unaccessible boundary (cortex) requires a single linear solve.
The FE discretization makes the mixed QRM very flexible, allowing computational domains of almost arbitrary complexity.
This approach has been advanced further to account for the non-constant conductivity and adjusted to the EEG source localization in \citep{Koshev2020,Malovichko2020}.

The mixed QRM has some drawbacks. 
The main practical disadvantage is the need to convert pointwise measurements on electrodes to a boundary value of potential on the scalp. 
Accurate interpolation of noisy data to an irregular surface (scalp) is a tedious task of its own. 
Even more important is that it is unclear how to take proper account of noise characterization, 
which is typically available by a noise covariance matrix, estimated for a given electrode layout.

In this paper, we study the inverse EEG problem from the optimal control (OC) standpoint. 
It is a well-established technique for inverse problems associated with PDE \cite{Hinze2009}, 
but it is rarely applied for the EEG/MEG source analysis.
Elements of the OC were used in \citep{Vallaghe_2009} as a tool to justify the calculation of the lead-field matrix through the adjoint-state technique.
In \citep{Faugeras_1999} the inverse EEG problem was formulated as a problem of finding the saddle point of the Lagrangian.
Those authors did not form the Euler-Lagrange system but approached the saddle point making alternating steps 
in the state and adjoint-state spaces; thus, this method never constituted an efficient algorithm.

Here, we formulate the inverse problem of EEG as an optimization problem with constraints, 
thus reducing it to the problem of finding an optimal point of Lagrangian.
We formulate the Euler-Lagrange variational system and approximate it in the finite-element function spaces.
Based on this approach, we proposed a new numerical method for the inverse problem of EEG, which uses pointwise noisy data.
This method addresses the critical issue of noise interpolation and, in this respect, 
can be considered as an “improved” mixed QRM.
Another advantage of our method is it eliminates the brain compartment from consideration, making the resulting mesh smaller and avoiding errors associated with distinguishing between white and grey matters. 
Also, since our algorithm does not form the lead-field matrix, the issues connected with interpolating a dipolar right-hand size in finite-element spaces are removed.

The paper is organized as follows. 
In Section~\ref{sec2}, we review the mixed quasi-reversibility method.
Section~\ref{sec3} is dedicated to the optimal-control formulation and derivation of the Euler-Lagrange variational system. 
In Section~\ref{sec4}, we verify the derived variational system.
The finite-element discretization, the algebraic problem, and the solution of the resulting system of 
linear equations are considered in Section~\ref{sec5}.
The FEM convergence is discussed in \ref{sec6}.
A numerical experiment aimed to verify our numerical method is presented in Section~\ref{sec7}.
Concluding remarks are given in Section~\ref{sec8}.

\section{The mixed quasi-reversibility method}
\label{sec2}

Here we review the mixed QRM.
The QRM can be traced back to \cite{LionsOC}.
Its mixed formulation within the finite-element framework has been proposed in \citep{Bourgeois_2005,Bourgeois_2006}.
It was advanced further in the context of EEG source analysis in \citep{Koshev2020,Malovichko2020}.
We derive the algorithm with OC formalism.
This derivation is similar to one presented in \citep{Malovichko2020}, 
but is adjusted toward the formal Lagrangian approach, used throughout the rest of this paper.

Let $\Omega \in \mathbb{R}^3$ be a part of the head enclosed by the scalp surface, 
$\Gamma_0$, and the cortical surface, $\Gamma_B$, that is, $\partial \Omega=\Gamma_0 \cup \Gamma_B$ (Figure~\ref{fig:head-schematic})
Electric potential on the scalp has zero normal derivative, $\partial u / \partial \nu =0$.
We assume that the potential inside the head is driven by a dipole layer attached to the cortical surface.
Let us denote by $f$ the unknown surface density of dipoles.
\begin{figure}[!htb]
\centering
		\includegraphics[width=2.in]{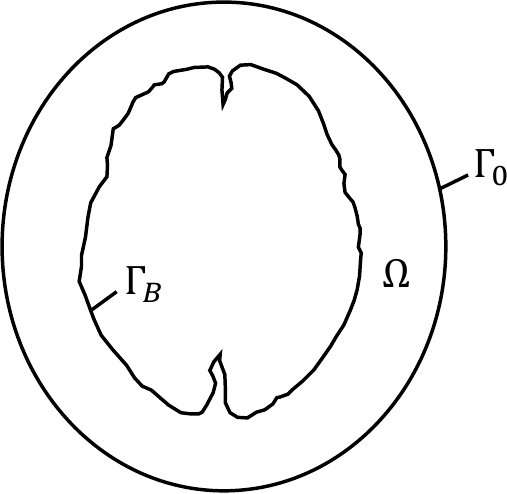}
	\caption
	{
		Schematic representation of the head. $\Gamma_0$ is the scalp surface, $\Gamma_B$ is the cortex (can consist of disjoint surfaces, e.g. the two hemispheres).
		Volume $\Omega$ includes different tissues (fat,skull, cerebrospinal fluid, etc.).
	}
\label{fig:head-schematic}
\end{figure}
Thus, the potential in the head satisfies the following differential equation,
\begin{subequations}
\label{eq:forward_problem}
\renewcommand{\theequation}{\theparentequation.\arabic{equation}}
    \begin{equation}
         -\mydiv(\sigma \nabla u) = 0, \text{ in } \Omega,   
    \end{equation}
    \begin{equation}
         \frac{\partial u}{\partial \nu} = 0, \text{ on } \Gamma_0,   
    \end{equation}
    \begin{equation}
         \sigma \frac{\partial u}{\partial \nu} = f, \text{ on } \Gamma_B.   
    \end{equation}
\end{subequations}

The head consists of several compartments of different conductivities.
The conductivity is usually considered smooth (even constant) inside each compartment 
but has jumps across the interfaces.
At the interfaces,  the potential is continuous, but the normal Ohmic current experiences jumps,
\begin{equation}
	\label{eq:interface_conditions}
	u_i=u_j, \quad \sigma_i \nabla u_i = \sigma_j \nabla u_j \text{ at } \Gamma_{ij},
\end{equation}
where $\Gamma_{ij}$ is the interface between a pair of compartments.
To avoid theoretical complications, we assume that the boundaries are smooth, 
which seems reasonable for EEG studies.
The problem \eqref{eq:forward_problem} is understood in the weak sense.
We specify $u \in H^1(\Omega)$, $\sigma \nabla u \in H_{div}(\Omega)$.
Here we used standard notation: $H^1(\Omega)$ is a space of functions having square-integrable gradient; 
$H_{div}(\Omega)$ is a space of functions having square-integrable divergence.
Thus, the interface conditions \eqref{eq:interface_conditions} not need to be added explicitly to 
\eqref{eq:forward_problem}, because they are satisfied by this choice
of function spaces.

We want to reconstruct potential $u$ in the head (including that on the cortex) 
such that 
(a) $u$ is satisfies Poisson's equation, 
(b) $u$ is smooth almost everywhere, 
and (c) $\myat{u}_{\Gamma_D}$ is equal to some given value $g_D$.
Consider the following inverse problem,
\begin{subequations}
	\renewcommand{\theequation}{\theparentequation.\arabic{equation}}
	\label{eq:qrm_continous}
	\begin{equation}	
		\label{eq:qrm_continous_1}
		\underset{u}{\text{minimize}} \quad
		\frac{\varepsilon}{2} \int_{\Omega} |\nabla u(f)|^2 \, dV,
	\end{equation}
	\begin{equation*}	
		\text{\textit{subject to}}
	\end{equation*}	
	\begin{equation}	
		\label{eq:qrm_continous_2}
		- \mydiv ( \sigma \nabla u )= 0, \quad \text{ in } \Omega,
	\end{equation}
	\begin{equation}	
		\label{eq:qrm_continous_3}
		\frac{\partial u}{\partial \nu} = 0, \text{ on } \Gamma_0,
	\end{equation}
	\begin{equation}	
		\label{eq:qrm_continous_4}
		u = g_D, \text{ on } \Gamma_0.
	\end{equation}
\end{subequations}
Here $\varepsilon>0$ is the regularization parameter, 
notation $u(f)$ emphasizes the fact that $u$ depends on $f$ 
through constraint \eqref{eq:qrm_continous_2}-\eqref{eq:qrm_continous_4}.

We form the Lagrangian that includes condition \eqref{eq:qrm_continous_2},
\begin{equation}	
	\label{eq:qrm_lagrangian}
	\mathcal{L}(u,\lambda) = 
	\frac{\varepsilon}{2} \int_{\Omega} |\nabla u|^2 \, dV
	-
	\int_{\Omega} \lambda \, \mydiv (\sigma \nabla u) \, dV,
\end{equation}
where $\lambda$ is the Lagrange multiplier.

By applying the Green-Gauss theorem to the second term of \eqref{eq:qrm_lagrangian} 
and using condition \eqref{eq:qrm_continous_3} 
we obtain the following equality,
\begin{equation}	
		\label{eq:qrm_green_gauss}
		\int_{\Omega} \lambda \, \mydiv (\sigma \nabla u) \, dV 
		=
		-\int_{\Omega} \sigma \nabla u \cdot \nabla \lambda  \, dV
		+
		\int_{\Gamma_B} \sigma \frac{\partial u}{\partial \nu} \lambda \, dV.
\end{equation}
Let us choose $\lambda$ such that its boundary values on $\Gamma_B$ is zero, 
so the boundary integral in \eqref{eq:qrm_green_gauss} vanishes,
\begin{equation}
	\label{eq:Q_definition}
	\lambda \in Q := \{ v \in H^1(\Omega) : \myat{v}_{\Gamma_B} = 0 \}.
\end{equation}
Now, we choose $u$ such that condition \eqref{eq:qrm_continous_4} is satisfied automatically by this choice:
\begin{equation}
	\label{eq:V_definition}
	u \in V := \{  v \in H^1(\Omega) : \myat{v}_{\Gamma_0} = g_D \}.
\end{equation}
Now, we substitute \eqref{eq:qrm_green_gauss} to \eqref{eq:qrm_lagrangian} using definitions 
\eqref{eq:Q_definition} and \eqref{eq:V_definition}, and get
\begin{equation}	
	\label{eq:qrm_lagrangian_new}
	\mathcal{L}(u,\lambda) = 
	\frac{\varepsilon}{2} \int_{\Omega} |\nabla u|^2 \, dV
	+
	\int_{\Omega} \sigma \nabla u \cdot \nabla \lambda  \, dV.
\end{equation}
The Lagrangian \eqref{eq:qrm_lagrangian_new} attains its optimal point whenever
$\mathcal{L}'_{u} = 0$ and $\mathcal{L}'_{\lambda} = 0$.
Thus, we have the following variational problem.
\begin{subequations}
	\renewcommand{\theequation}{\theparentequation.\arabic{equation}}
	\label{eq:qrm_variational}
	\begin{equation*}	
		\text{ Find } (u,\lambda) \in V \times Q, \text{ such that }
	\end{equation*}
	\begin{equation}	
		\label{eq:qrm_variational_1}
		\varepsilon \int_{\Omega} \nabla u \cdot \nabla v \, dV 
		+
		\int_{\Omega} \sigma \nabla \lambda \cdot \nabla v \, dV
		= 0,
		\quad  \forall v \in V_0,
	\end{equation}
	\begin{equation}	
		\label{eq:qrm_variational_2}
		\int_{\Omega} \sigma \nabla u \cdot \nabla \mu \, dV 
		= 0, \quad \forall \mu \in Q.
	\end{equation}
\end{subequations}
Function space $V_0$ is specified as a set of all elements from $H^1(\Omega)$ having zero boundary value on $\Gamma_0$,
\begin{equation}
	\label{eq:V0_definition}
	V_0 := \{  v \in H^1(\Omega) : \myat{v}_{\Gamma_0} = 0 \}.
\end{equation}

This problem needs to be stabilized, see \cite{Bourgeois_2005},Remark 3. 
We add term $-\delta \int_{\Omega} 
|\nabla \lambda|^2 dV$ to \eqref{eq:qrm_lagrangian_new}, where $\delta > 0$.
The modified variational problem reads,
\begin{subequations}
	\renewcommand{\theequation}{\theparentequation.\arabic{equation}}
	\label{eq:qrm_variational_new}
	\begin{equation*}	
		\text{ \textit{Find} } (u,\lambda) \in V \times Q \text{ \textit{such that} }
	\end{equation*}
	\begin{equation}	
		\label{eq:qrm_variational_new_1}
		\varepsilon \int_{\Omega} \nabla u \cdot \nabla v \, dV 
		+
		\int_{\Omega} \sigma \nabla \lambda \cdot \nabla v \, dV
		= 0,
		\quad  \forall v \in V_0,
	\end{equation}
	\begin{equation}	
		\label{eq:qrm_variational_new_2}
		\int_{\Omega} \sigma \nabla u \cdot \nabla \mu \, dV 
		-
		\delta \int_{\Omega} \nabla \lambda \cdot \nabla \mu \, dV
		= 0, \quad \forall \mu \in Q.
	\end{equation}
\end{subequations}
This is the mixed formulation of the quasi reversibility.
Note that, instead of $\int_{\Omega} |\nabla u|^2 dV$ and $\int_{\Omega}|\nabla \lambda|^2 dV$ we could have used 
$\int_{\Omega} (|u|^2 + |\nabla u|)^2 dV$ and $\int_{\Omega}(|\lambda|^2 + |\nabla \lambda|)^2 dV$, respectively. 
This way we would have variational system (14) from \citep{Malovichko2020}.

System \eqref{eq:qrm_variational_new} was rigorously studied in \citep{Bourgeois_2005,Bourgeois_2006}.
Its numerical discretization and validation in the EEG context were presented in \citep{Malovichko2020}.

Note that this algorithm requires as input the boundary value of $u$ on $\Gamma_0$, i.e., $g_D$.
In practice, we have discrete data: potential is measured at a small set of electrodes.
Interpolation of discrete data onto a whole surface of the grid produces considerable correlated noise.
More importantly, it is not clear how to incorporate the noise characterization, 
typically represented by a noise covariance matrix, into formulation 
\eqref{eq:qrm_variational_new}.
Finally, formulation \eqref{eq:qrm_variational_new} imposes a constraint 
on the volume distribution of $u$, although researchers prefer to control the cortical distribution of the potential.

\section{The PDE-constrained optimization}
\label{sec3}

In this section, we apply the optimal control formalism to the EEG source reconstruction problem.

The electrical potential is measured in a set of electrodes. 
Each data point $d_i$ is a value of potential $u$ at the $i$-th electrode.
Formally, $d_i$ is the convolution of $u$ with the Dirac $\delta$ function,
\begin{equation}
	\label{eq:Qi_def}
    d_i = \mathcal{Q}_i(u) := \int_{\Omega} \delta(x-x_i) u(x) \, dV,
\end{equation}
where $x_i \in \mathbb{R}^3$ is the position of the $i$-th electrode, $\mathcal{Q}_i$ is the observation operator. Let $d \in \mathbb{R}^K$ be a vector of all data points.

We introduce a diagonal matrix of weights, $W$, representing estimates of noise standard deviation.
Each data point $d_i$ is associated with a weight $w_i>0$.
By assuming that $W$ is diagonal, that is, the noise is uncorrelated, we aim to simplify the formulas below.
A dense noise covariance matrix can be readily used here (see next section).

We set up the following optimization problem.
\begin{subequations}
\renewcommand{\theequation}{\theparentequation.\arabic{equation}}
\label{eq:minimization}
\begin{equation}
	\label{eq:minimization_1}
	\underset{f,u}{\text{minimize}} \quad
	\Psi(u,f) :=
	\frac{1}{2} \sum_{i=0}^{K-1} w_i^2(\mathcal{Q}_i u - d_i)^2 + 
	\frac{\varepsilon}{2} \int_{\Gamma_B}| \nabla_B f|^2 \,dS,
\end{equation}
	\text{ \textit{subject to} }
\begin{equation}
	\label{eq:minimization_2}
	-\mydiv (\sigma \nabla u) = 0, \text{ in } \Omega, 
\end{equation}
\begin{equation}
	\label{eq:minimization_3}
	 \frac{\partial u}{\partial \nu} = 0, \text{ on } \Gamma_0, 
	 \sigma \frac{\partial u}{\partial \nu} = f, \text{ on } \Gamma_B.
\end{equation}
\end{subequations}
The first term in \eqref{eq:minimization_1} is the misfit between measured data, $d_i$, and computed data, $Q_i u$.
The second term is the stabilizing functional with $\varepsilon >0$ is the regularization parameter.
Here $\nabla_B$ is the surface gradient defined on $\Gamma_B$.
Thus, we require $f$ be a weakly smooth function on $\Gamma_B$.
Constraints \eqref{eq:minimization_2} and \eqref{eq:minimization_3} determine the link between the electric potential, $u$,
and the source function, $f$.

Here we outline the functional framework for $u$ and $f$.
Since data $d$  are "pointwise", the integrals \eqref{eq:Qi_def} must exist for $\delta \in H^{s}, s<-3/2$ and any admissible state $u$.
Let us specify the space of admissible states as
\begin{equation}
	U_{ad} := \{ u \in H^1(\Omega) \cap \mathcal{C}(\bar{\Omega})\}.
\end{equation}
We also specify the space of admissible controls $f$,
\begin{equation}
	F := \{ f \in H^1(\Gamma_B): f \neq const\}.
\end{equation}

Next, we derive the variational system for problem \eqref{eq:minimization}.
The derivation is based on the Lagrangian formalism aiming to design the proper form of the Euler-Lagrange system.
We form the Lagrangian $\mathcal{L}$ as follows,
\begin{equation}
\label{eq:lagrangian1}
    \mathcal{L}(u,f,\lambda) = \Psi(u,f) - 
    \int_{\Omega} \mydiv (\sigma \nabla u) \, \lambda \, dV
    + \int_{\Gamma_B} \left( \sigma \frac{\partial u}{ \partial \nu} - f \right) \lambda \, dS ,
\end{equation}
where $\lambda$ is the Lagrange multiplier, defined in $\Omega \cup \Gamma_B$.
Next, we apply the Green-Gauss theorem to the second term of \eqref{eq:lagrangian1} and 
use condition $\partial u / \partial \nu = 0$ on $\Gamma_0$:
\begin{equation}
\label{eq:green-gauss}
    \int_{\Omega} \mydiv (\sigma \nabla u) \, \lambda \, dV = 
    - \int_{\Omega} \sigma \nabla u \cdot \nabla \lambda \, dV
    + \int_{\Gamma_B} \sigma \frac{\partial u}{\partial \nu} \lambda \, dS .
\end{equation}
We have
\begin{equation}
\label{eq:lagrangian2}
    \mathcal{L}(u,f,\lambda) = \Psi(u,f) 
    + \int_{\Omega} \sigma \nabla u \cdot \nabla \lambda \, dV
    - \int_{\Gamma_B}f  \lambda \, dS ,    
\end{equation}
Thus, optimization with constraints \eqref{eq:minimization} translates to the problem of finding a stationary point of the Lagrangian \eqref{eq:lagrangian2}.
The necessary conditions for a triple $(u,f,\lambda)$ be a stationary point are that the derivatives of $\mathcal{L}(u,f,\lambda)$ with respect to $u$,$f$, and $\lambda$ vanish.
Since $\Psi$ is convex, the necessary conditions are also sufficient.
Thus, we set up the following variational problem,
\begin{subequations}
\label{eq:variational_system1}
\renewcommand{\theequation}{\theparentequation.\arabic{equation}}
\begin{equation*}
	\text{\textit{Find triple }} 
	(f,\lambda,u)\in F \times \Lambda \times U
	\text{\textit{ which verifies }}
\end{equation*}
\begin{equation}
	\label{eq:variational_system1-1}
    \varepsilon \int_{\Gamma_B} \nabla_B f \cdot \nabla_B \varphi \, dS
    - \int_{\Gamma_B} \lambda \varphi \, dS = 0
	, \quad \forall \varphi \in F,
\end{equation}
\begin{equation}
	\label{eq:variational_system1-2}
    -\int_{\Gamma_B} f \mu \, dS
    +\int_{\Omega} \sigma \nabla u \cdot \nabla \mu \, dV  = 0
	, \quad \forall \mu \in \Lambda,
\end{equation}
\begin{equation}
	\label{eq:variational_system1-3}
	\int_{\Omega} \sigma \nabla \lambda \cdot \nabla v \, dV
	+\sum_{i=0}^{K-1} w_i^2 \mathcal{Q}_i(u) \mathcal{Q}_i(v)
	=
    \sum_{i=0}^{K-1} w_i^2 \mathcal{Q}_i(v) d_i
    , \quad \forall v \in U.
\end{equation}
\end{subequations}
where 
\begin{subequations}
\renewcommand{\theequation}{\theparentequation.\arabic{equation}}
	\begin{equation}
		U := \{v \in U_{ad} : \frac{\partial v}{\partial \nu} = 0 \text{ on } \Gamma_0\},
	\end{equation}
	\begin{equation}
		\Lambda = \{ \mu \in H^1(\Omega) : \frac{\partial \mu }{\partial \nu} = 0 \text{ on } \partial \Omega \}.
	\end{equation}
\end{subequations}

Let us rewrite \eqref{eq:variational_system1} as follows,
\begin{subequations}
\label{eq:variational_system2}
\renewcommand{\theequation}{\theparentequation.\arabic{equation}}
\begin{equation*}
	\text{\textit{Find triple }} 
	(f,\lambda,u)\in F \times \Lambda \times U
	\text{\textit{ which verifies }}
\end{equation*}
\begin{equation}
	a(f,\varphi) - b^t(\lambda, \varphi) = 0
	, \quad \forall \varphi \in F,
\end{equation}
\begin{equation}
	- b(f,\mu) + e^t(u,\mu) = 0
	, \quad \forall \mu \in \Lambda,
\end{equation}
\begin{equation}
	e(\lambda,v) + g(u,v) = R(v)
    , \quad \forall v \in U,
\end{equation}
\end{subequations}
where superscript $^t$ stands for transposition, bilinear forms $a(\cdot, \cdot)$, $b(\cdot, \cdot)$, 
$e(\cdot, \cdot)$, $g(\cdot, \cdot)$ and a linear form $R(\cdot)$
are defined as follows,
\begin{subequations}
\renewcommand{\theequation}{\theparentequation.\arabic{equation}}
\label{eq:bilinear_forms}
\begin{align}
	a(f, \varphi) = \varepsilon \int_{\Gamma_B} \nabla_B f \cdot \nabla_B \varphi \,dS , \\
	b(f,\mu) = \int_{\Gamma_B} f \mu \,dS , \\
	e(\lambda,v) = \int_{\Omega} \sigma \nabla \lambda \cdot \nabla v \, dV, \\
	g(u,v) = \sum_{i=0}^{K-1}w_i^2 \mathcal{Q}_i(u) \mathcal{Q}_i(v), \\
	R(v) = \sum_{i=0}^{K-1}w_i^2 \mathcal{Q}_i(v) d_i. 
\end{align} 
\end{subequations}
%

\section{Solvability}
\label{sec4}

In this section, we demonstrate that problem \eqref{eq:minimization} admits a unique solution.
In addition, since the formal Lagrangian method does constitute a rigorous proof, we prove that \eqref{eq:variational_system1} and \eqref{eq:minimization} are equivalent.

First, let us prove that problem \eqref{eq:minimization} has a unique optimal control $f$.
The general technique of proving the existence of the optimal control for 
a linear-quadratic problem is well established and based on the fact that $F$ 
is weak lower semi-continuous. 
Thus, we will check the required conditions.
Both $U_{ad}$ and $F$ are convex and closed.
We write problem \eqref{eq:minimization} as follows,
\begin{subequations}
\renewcommand{\theequation}{\theparentequation.\arabic{equation}}
\label{eq:linear-quadratic}
\begin{equation}
	\label{eq:linear-quadratic-1}
	\underset{f \in F, u \in U_{ad}}{\text{minimize}} \quad
	\Psi(f, u) :=
	\frac{1}{2} \norm{ \mathcal{Q} u - d}^2_2 + 
	\frac{\varepsilon}{2} \norm{ \nabla_B f}^2_{L^2(\Gamma_B)},
\end{equation}
\begin{equation}
	\label{eq:linear-quadratic-2}
	-\mydiv (\sigma \nabla u) = \xi, \text{ in } \Omega, 
\end{equation}
\begin{equation}
	\label{eq:linear-quadratic-3}
	 \sigma \frac{\partial u}{\partial \nu} = f, \text{ on } \Gamma_B,
\end{equation}
\begin{equation}
	\label{eq:linear-quadratic-4}
	 \frac{\partial u}{\partial \nu} = g_N, \text{ on } \Gamma_0,
\end{equation}
\end{subequations}
Here operator $\mathcal{Q}:\mathcal{C}(\Omega) \rightarrow \mathbb{R}^K$,
\begin{equation}
	\mathcal{Q} v := \{ \mathcal{Q}_0(v), .., \mathcal{Q}_{K-1}(v)\}.
\end{equation}
In what follows we will use its adjoint 
$\mathcal{Q}^*:\mathbb{R}^K \rightarrow \mathcal{C}(\Omega)^* $,  which is
\begin{equation}
	\mathcal{Q}^* z := \sum_{j=0}^{K-1} z_j \delta_j
\end{equation}
Functions $g_N$ and $\xi$ are given (precisely, $\xi=0$, $g_N=0$).
If $f$ is given as well, than the forward problem \eqref{eq:linear-quadratic-1}-\eqref{eq:linear-quadratic-4} is well-posed.
Thus, we define the control-to-target operator, $\mathcal{G}:= f \mapsto d : H^1(\Gamma_B) \rightarrow \mathbb{R}^K$.
Operator $\mathcal{G}$ is linear.
Problem \eqref{eq:linear-quadratic} is equivalent to the reduced problem
\begin{equation}
	\label{eq:reduced-problem}
	\underset{f \in F}{\text{minimize}} \quad
	\Phi(f) :=
	\frac{1}{2} \norm{ \mathcal{G} f - d}^2_2 + 
	\frac{\varepsilon}{2} \norm{ \nabla_B f}^2_{L^2(\Gamma_B)}.
\end{equation}
The cost function $\Phi$ is convex.
Thus, we can formulate the following result.
\begin{proposition}
	For any $\varepsilon >0$ problem~\eqref{eq:linear-quadratic} admits the optimal control $\bar{f}$, 
	and optimal state $\bar{u}$, which are unique.
\end{proposition}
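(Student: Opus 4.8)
The plan is to establish existence and uniqueness of the minimizer of the reduced functional $\Phi$ in \eqref{eq:reduced-problem} by the standard direct method of the calculus of variations, exploiting convexity and weak lower semicontinuity, and then to transfer the conclusion back to the original constrained problem \eqref{eq:linear-quadratic} via the control-to-state map.

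First I would pin down the correct functional setting. The space $F$ as literally written, $\{f\in H^1(\Gamma_B): f\neq\mathrm{const}\}$, is not closed (it is $H^1(\Gamma_B)$ minus a subspace), so I would instead work on the quotient space $\tilde F := H^1(\Gamma_B)/\mathbb{R}$ (equivalently, the closed subspace of $H^1(\Gamma_B)$ of functions with zero mean on $\Gamma_B$), on which $f\mapsto\|\nabla_B f\|_{L^2(\Gamma_B)}$ is an equivalent norm by the Poincar\'e–Wirtinger inequality; this is natural because the Neumann forward problem \eqref{eq:linear-quadratic-2}–\eqref{eq:linear-quadratic-4} only determines $u$ from data $(f,\xi,g_N)$ satisfying the compatibility condition $\int_{\Gamma_B} f\,dS + \int_\Omega\xi\,dV - \int_{\Gamma_0} g_N\,dS = 0$ and only up to an additive constant, which the observation operator $\mathcal Q$ together with the misfit term will not see. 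I would note that for $f\in H^1(\Gamma_B)\hookrightarrow H^{1/2}(\Gamma_B)$ the Neumann problem has a unique weak solution $u\in H^1(\Omega)$ (modulo constants, fixed e.g. by zero mean), and that by elliptic regularity together with the smoothness of the interfaces assumed in Section~\ref{sec2} one has $u\in\mathcal C(\bar\Omega)$, so $\mathcal Q u$ is well defined and $\mathcal G = \mathcal Q\circ(\text{solution map})$ is a bounded linear operator $\tilde F\to\mathbb{R}^K$. This gives $\Phi(f) = \tfrac12\|\mathcal G f - d\|_2^2 + \tfrac{\varepsilon}{2}\|\nabla_B f\|_{L^2(\Gamma_B)}^2$ a well-defined convex functional on the Hilbert space $\tilde F$.

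Next I would run the direct method. The functional $\Phi$ is coercive on $\tilde F$: since $\varepsilon>0$ and $\|\nabla_B\cdot\|_{L^2(\Gamma_B)}$ is equivalent to the $\tilde F$-norm, $\Phi(f)\to\infty$ as $\|f\|_{\tilde F}\to\infty$. Take a minimizing sequence $f_n$; it is bounded in the Hilbert space $\tilde F$, hence has a weakly convergent subsequence $f_n\rightharpoonup \bar f$. The first term $f\mapsto\tfrac12\|\mathcal G f - d\|_2^2$ is continuous and convex, hence weakly lower semicontinuous (composition of the bounded linear $\mathcal G$ with a continuous convex function; weak convergence in $\tilde F$ gives $\mathcal G f_n\to\mathcal G\bar f$ in the finite-dimensional $\mathbb{R}^K$, so this term actually converges); the second term $f\mapsto\tfrac{\varepsilon}{2}\|\nabla_B f\|_{L^2}^2$ is (equivalent to) the squared norm, hence weakly lower semicontinuous. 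Therefore $\Phi(\bar f)\le\liminf_n\Phi(f_n) = \inf\Phi$, so $\bar f$ is a minimizer. Uniqueness follows because $\Phi$ is \emph{strictly} convex: the quadratic form $f\mapsto\tfrac{\varepsilon}{2}\|\nabla_B f\|_{L^2}^2$ is strictly convex on $\tilde F$ (its Hessian is $\varepsilon$ times an equivalent inner product), and adding the convex first term preserves strict convexity. Finally, setting $\bar u$ to be the state associated with $\bar f$ (normalized, say, to zero mean) gives the optimal state, and its uniqueness follows from the uniqueness of $\bar f$ and the well-posedness of the forward problem; the equivalence of \eqref{eq:linear-quadratic} and \eqref{eq:reduced-problem} was already observed in the text.

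I expect the main obstacle to be the functional-analytic bookkeeping rather than any deep estimate: specifically, justifying $u\in\mathcal C(\bar\Omega)$ — i.e. continuity of the solution of the piecewise-smooth-coefficient Neumann problem — so that the pointwise observations $\mathcal Q_i u$ make sense and $\mathcal G$ is genuinely bounded into $\mathbb{R}^K$, and dealing cleanly with the constant: either by quotienting as above or by fixing the gauge $\int_\Omega u\,dV = 0$ (and correspondingly restricting $F$ to zero-mean functions or, equivalently, to $H^1(\Gamma_B)/\mathbb R$). Once these points are settled, the coercivity–weak-lsc–strict-convexity argument is entirely standard and short.
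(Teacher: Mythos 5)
Your argument is correct, and in substance it coincides with the paper's proof: the paper disposes of this proposition with a one-line citation to Hinze et al.\ (Theorem 1.43), and the standard proof of that theorem is exactly the reduced-problem direct method you carry out --- coercivity of $\Phi$ through the $\varepsilon$-weighted regularizer, weak lower semicontinuity of both terms (the misfit being a continuous convex function of the finite-rank, bounded linear $\mathcal{G}f$), existence by extraction of a weakly convergent minimizing subsequence, uniqueness by strict convexity, and recovery of the unique optimal state through the well-posed control-to-state map. Where you go beyond the paper is in the functional-analytic repairs that the citation leaves implicit: you observe that $F=\{f\in H^1(\Gamma_B): f\neq \mathrm{const}\}$ is neither closed nor convex as written and replace it by the zero-mean (quotient) space on which $\|\nabla_B f\|_{L^2(\Gamma_B)}$ is an equivalent norm, you note the Neumann compatibility condition and the gauge-fixing of the additive constant in $u$ needed for $\mathcal{G}$ to be well defined, and you flag that continuity of $u$ up to the boundary (elliptic regularity near the electrode sites, where the conductivity is smooth) is what makes the pointwise observations $\mathcal{Q}_i u$ and hence the boundedness of $\mathcal{G}$ legitimate --- the paper instead builds $u\in\mathcal{C}(\bar\Omega)$ into $U_{ad}$ by definition. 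These additions do not change the route; they make the hypotheses of the cited theorem actually verifiable in this setting, so your proposal can be read as a self-contained version of the paper's proof rather than an alternative to it.
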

\begin{proof}
	\cite{Hinze2009}[Thm 1.43].
\end{proof}

Let us show that problems \eqref{eq:minimization} and \eqref{eq:variational_system1} are 
equivalent. 
\begin{proposition}
	Control $f$ and state  $u$ are the minimizers of \eqref{eq:minimization} if and only if there exists 
	$\lambda$ such that $(u,f,\lambda)$ solve variational system \eqref{eq:variational_system1}.
\end{proposition}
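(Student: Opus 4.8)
The plan is to prove the two directions of the equivalence separately, using the fact (already established in Proposition 1) that the reduced cost functional $\Phi$ is convex and the constraint spaces are convex and closed, so that first-order stationarity is both necessary and sufficient for optimality. The bridge between the constrained problem \eqref{eq:minimization} and the variational system \eqref{eq:variational_system1} is the Lagrange multiplier $\lambda$, and the key is to identify $\lambda$ as the solution of the adjoint equation \eqref{eq:variational_system1-3}.

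\medskip

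\noindent\textbf{($\Rightarrow$)} Suppose $(u,f)$ minimizes \eqref{eq:minimization}. First I would note that $u$ is uniquely determined by $f$ through the state equation \eqref{eq:minimization_2}--\eqref{eq:minimization_3}; writing this equation weakly against test functions $\mu\in\Lambda$ and applying the Green--Gauss theorem as in \eqref{eq:green-gauss}, using $\partial u/\partial\nu=0$ on $\Gamma_0$ and $\sigma\,\partial u/\partial\nu = f$ on $\Gamma_B$, gives exactly \eqref{eq:variational_system1-2}. Next I would \emph{define} $\lambda\in\Lambda$ to be the unique solution of the adjoint variational equation \eqref{eq:variational_system1-3}; well-posedness here follows because $e(\cdot,\cdot)$ restricted to $\Lambda$ (functions with vanishing normal derivative on all of $\partial\Omega$, hence essentially $H^1/\mathbb{R}$) is coercive and $g$ is a bounded nonnegative bilinear form, while the right-hand side $R$ is a bounded linear functional on $U$ — one must check the pointwise evaluations $\mathcal{Q}_i$ are bounded on $U\subset\mathcal{C}(\bar\Omega)$, which is why the continuous functional framework was set up. Finally, to recover \eqref{eq:variational_system1-1}, I would compute the Gateaux derivative of the reduced functional $\Phi$ at $f$ in a direction $\varphi\in F$: differentiating $\mathcal{G}f = \mathcal{Q}u(f)$ gives $\mathcal{G}'\varphi = \mathcal{Q}\,\delta u$ where $\delta u$ solves the linearized (here, same) state equation with source $\varphi$ on $\Gamma_B$; the optimality condition $\Phi'(f)\varphi = 0$ reads $\sum_i w_i^2(\mathcal{Q}_i u - d_i)\mathcal{Q}_i(\delta u) + \varepsilon\int_{\Gamma_B}\nabla_B f\cdot\nabla_B\varphi\,dS = 0$. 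Using $\delta u$ as a test function in \eqref{eq:variational_system1-3} and $\lambda$ as a test function in the linearized state equation \eqref{eq:variational_system1-2} (the standard adjoint trick) converts the data-misfit term into $\int_{\Gamma_B}\lambda\varphi\,dS$, yielding \eqref{eq:variational_system1-1}.

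\medskip

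\noindent\textbf{($\Leftarrow$)} Conversely, suppose $(u,f,\lambda)$ solves \eqref{eq:variational_system1}. Equation \eqref{eq:variational_system1-2} says precisely that $u$ is the weak state associated to the control $f$, so $(f,u)$ is feasible for \eqref{eq:minimization}. Running the adjoint computation of the previous paragraph backwards: given any admissible variation $\varphi\in F$ with induced state variation $\delta u$, equations \eqref{eq:variational_system1-3} and \eqref{eq:variational_system1-1} together reassemble into $\Phi'(f)\varphi = 0$ for all $\varphi$, i.e.\ $f$ is a stationary point of the convex functional $\Phi$ on the convex set $F$. By convexity of $\Phi$, stationarity implies global optimality, so $f$ — and hence $u = u(f)$ — minimizes \eqref{eq:minimization}.

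\medskip

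\noindent I expect the main obstacle to be the rigorous handling of the pointwise observation functionals $\mathcal{Q}_i$ and the regularity of the state: one must justify that $u\in\mathcal{C}(\bar\Omega)$ (so that $\mathcal{Q}_i u$ makes sense and $\mathcal{Q}:\mathcal{C}(\bar\Omega)\to\mathbb{R}^K$ is bounded), that the adjoint equation with the Dirac right-hand side $\mathcal{Q}^*z = \sum_j z_j\delta_j$ has a well-defined solution $\lambda\in\Lambda$ in a suitable weak sense, and that the linearized-state-as-test-function manipulation is legitimate despite these low-regularity sources. The algebraic parts — differentiating $\Phi$, the adjoint identity, and invoking convexity — are routine; it is the functional-analytic bookkeeping around the Dirac data and the choice of the pivot space for $\lambda$ that requires care, and I would lean on the continuity embedding $H^1(\Omega)\cap\mathcal{C}(\bar\Omega)$ and standard elliptic theory for measures as right-hand sides to close this gap.
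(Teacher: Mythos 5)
Your proposal is correct and takes essentially the same route as the paper: both directions hinge on reducing \eqref{eq:minimization} to the convex reduced functional $\Phi$, identifying \eqref{eq:variational_system1-1} with the stationarity condition $\Phi'(f)=0$ through the adjoint state $\lambda$ (defined via the adjoint equation \eqref{eq:variational_system1-3}), and invoking convexity to pass between stationarity and global optimality. The only difference is presentational: you derive the key identity $\sum_i w_i^2(\mathcal{Q}_i u - d_i)\mathcal{Q}_i(\delta u) = -\int_{\Gamma_B}\lambda\,\varphi\,dS$ by testing the adjoint and linearized state equations against each other, whereas the paper performs the same computation with the explicit operator calculus $u = S T^* f$, $\lambda = S^*\mathcal{Q}^*(\mathcal{Q}u - d)$, and your closing remarks about the continuity of the pointwise functionals $\mathcal{Q}_i$ on $U_{ad}\subset\mathcal{C}(\bar\Omega)$ make explicit a point the paper leaves implicit.
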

\begin{proof}

	We will assume $w_i^2=1$ (solely to render formulas more compact).
	
	$"\Rightarrow"$. 
	Let $(f,u,\lambda)$ be the stationary point of \eqref{eq:minimization}.

	Let $T: H^1(\Omega) \rightarrow L^2(\Gamma_B)$ be the trace operator,
	\begin{equation}
		Tu := \myat{u}_{\Gamma_B}, \quad v \in H^1(\Omega).
	\end{equation}
	Its adjoint $T^*:L^2(\Gamma_B) \rightarrow H^1(\Omega)^*$ is defined by relation 
	\begin{equation}
		\langle T^*v,u \rangle_{H^1(\Omega)^*,H^1(\Omega)} 
		= 
		\langle v,Tu \rangle_{L^2(\Gamma_B),L^2(\Gamma_B)}
		=		
		\int_{\Gamma_B} v \, Tu \, dS, \quad v \in L^2(\Gamma_B) , u \in H^1(\Omega).
	\end{equation}
	From \eqref{eq:variational_system1-2} we see that $u \in H^1(\Omega)$ is such that $\myat{\partial u / \partial \nu}_{\Gamma_0} = 0$ and
	\begin{equation}
		\int_{\Omega} \sigma \nabla u \cdot \nabla \mu \, dV  =
		\int_{\Gamma_B} f \, T \mu \, dS
		, \quad \forall \mu \in H^1(\Omega) ,
	\end{equation}
	The right-hand side is $\langle T \mu,f\rangle_{L^2(\Gamma_B),L^2(\Gamma_B)} = \langle \mu,T^* f\rangle_{H^1(\Omega),H^1(\Omega)^*}$.
	Thus, we can rewrite it as  
	\begin{equation}
		u = S T^* f,
	\end{equation}
	where $S:H^1(\Omega)^* \rightarrow H^1(\Omega)$.
	From \eqref{eq:variational_system1-3} it follows that 
	$\lambda \in H^1(\Omega)$, $\myat{\partial \lambda / \partial \nu}_{\partial \Omega} = 0$ and
	\begin{equation}
		\label{eq:proof-eq3}
		\int_{\Omega} \sigma \nabla \lambda \cdot \nabla v \, dV
		=
		\langle d - \mathcal{Q}(u) , \mathcal{Q}v \rangle_{\mathbb{R}^K,\mathbb{R}^K}.
	\end{equation}
	Since 
	$\langle d - \mathcal{Q}(u) , \mathcal{Q}v \rangle_{\mathbb{R}^K,\mathbb{R}^K}  = \langle Q^*(d - \mathcal{Q}(u)) , v \rangle_{H^1(\Omega)^*,H^1(\Omega)}$, we conclude that
	\begin{equation}
		\lambda = S^* \mathcal{Q}^* (Qu-d) = S^* \mathcal{Q}^* (QSTf-d),
	\end{equation}
	where
	$S^*: H^1(\Omega)^* \rightarrow H^1(\Omega)$.
	Let us take \eqref{eq:variational_system1-1},
	\begin{equation}	
		- \int_{\Gamma_B} T\lambda \, \varphi \, dS 
		+ \varepsilon \int_{\Gamma_B} \nabla_B f \cdot \nabla_B \varphi \, dS
		= 0
		, \quad \forall \varphi \in H^1(\Gamma_B).
	\end{equation}
	By substituting for $\lambda$ we get
	\begin{equation}	
		\label{eq:proof-eq4}	
		- \int_{\Gamma_B} T S^* \mathcal{Q}^* (Qu-d) \varphi \, dS 
		+ \varepsilon \int_{\Gamma_B} \nabla_B f \cdot \nabla_B \varphi \, dS
		= 0
		, \quad \forall \varphi \in H^1(\Gamma_B).
	\end{equation}
	Since $\nabla_f (Qu - d) = \nabla_f (QST^*f - d)=T S^* \mathcal{Q}^*$ we see that \eqref{eq:proof-eq4} means that the weak gradient of $\Phi$ with respect to $f$
	is zero. 
	Thus, $f$ minimizes $\Phi$, and $(f,u)$ minimizes \eqref{eq:minimization}.

	$"\Leftarrow"$.
	 
	Let $(u,f)$ be the minimizer of \eqref{eq:minimization}.
	It means that $u$ satisfies constraints \eqref{eq:minimization_2},\eqref{eq:minimization_2}, 
	e.i. $u \in H^1(\Omega)$, $\myat{\partial u / \partial \nu}_{\Gamma_0} = 0$ , and 
	\begin{equation}
		\label{eq:recovered1}
		\int_{\Omega} \nabla u \cdot \nabla v \, dV 
		= 
		\int_{\Gamma_B} f Tv \, dS, 
		\quad 
		\forall v \in H^1(\Omega), \myat{\partial v / \partial \nu}_{\Gamma_0} = 0.
	\end{equation}
	Thus we recovered equation \eqref{eq:variational_system1-2}.
	
	Let us introduce operators $S$ and $T$ with the same definitions that before.
	Equation \eqref{eq:recovered1} can be rewritten as 
	\begin{equation}
		u = S T^* f.
	\end{equation}
	Thus, 
	\eqref{eq:minimization} is equivalent to the reduced problem in the form
	\begin{equation}
		\label{eq:reduced-problem-2}
		\underset{f}{\text{minimize}} \quad
		\Phi(f) :=
		\frac{1}{2} \norm{ \mathcal{Q} S T^* f - d}^2_2 + 
		\frac{\varepsilon}{2} \norm{ \nabla_B f}^2_{L^2(\Gamma_B)},
	\end{equation}
	where $f \in F$.
	Function $f$ minimizes \eqref{eq:reduced-problem-2} and so it verifies
	the following equality,
	\begin{equation}
		\label{eq:12345}
		\int_{\Gamma_B} T S^* \mathcal{Q}^* (\mathcal{Q} S T^* f - d)  \varphi \, dS + 
		\varepsilon \int_{\Gamma_B} \nabla_B f \cdot \nabla_B \varphi \, dS = 0, 
		\quad 
		\forall \varphi \in F.
	\end{equation}
	We define 
	\begin{equation}
		\label{eq:lambda_def}
		\lambda := - S^* \mathcal{Q}^* (\mathcal{Q} u - d), 
	\end{equation}
	$\lambda \in H^1(\Omega)$, $\myat{\partial \lambda / \partial \nu}_{\partial \Omega} = 0$.
	Thus, \eqref{eq:12345} turns into
	\begin{equation}
		\label{eq:recovered2}
		-\int_{\Gamma_B} T \lambda  \varphi \, dS +
		\varepsilon \int_{\Gamma_B} \nabla_B f \cdot \nabla_B \varphi \, dS
		=0 , 
		\quad 
		\forall \varphi \in F,
	\end{equation}
	and we recovered equation \eqref{eq:variational_system1-1}.
	
	Finally, from \eqref{eq:lambda_def} of follows that $(S^*)^{-1} \lambda = -\mathcal{Q}^* (\mathcal{Q} u - d)$, 
	which is in the variational form reads
	\begin{equation}
		\label{eq:recovered3}
		\int_{\Omega} \nabla \lambda \cdot \nabla v \, dV
		= -\int_{\Omega} \mathcal{Q}^* (Qu-d) v \, dV,
		\quad 
		\forall v \in H^1(\Omega).
	\end{equation}
	Since $\int_{\Omega} \mathcal{Q}^* (Qu-d) v \, dV = \sum_{j=0}^{K-1} (Q_j v -d_j)\, Q_j v$, we recover 
	equation \eqref{eq:variational_system1-3}.

\end{proof}

\section{The finite-element discretization}
\label{sec5}

In this section, we formulate a discrete analog of \eqref{eq:variational_system2} using FEM.
Let us cover domain $\Omega$ with a set of tetrahedra, 
$\Omega_h = \bigcup_{i} \Omega_i$, where $h$ stands for the maximal diameter.
By $\mathcal{T}_h = \bigcup_i \mathcal{T}_i$ we will denote a set of faces of $\Omega_h$ on the cortical surface $\Gamma_B$.
We will assume that partitions $\Omega_h$ and $\mathcal{T}_h$ are regular, e.i. their elements 
will not degenerate as $h$ goes to zero.

By defining the finite-dimensional spaces $F_h \subset H^1(\Gamma_B)$, 
$U_h \subset H^1(\Omega)$,$\Lambda_h \subset H^1(\Omega)$,
we arrive to the following finite-dimensional problem,
\begin{subequations}
\label{eq:discrete_variational_system2}
\renewcommand{\theequation}{\theparentequation.\arabic{equation}}
\begin{equation*}
	\text{\textit{Find triple }} 
	(f_h,\lambda_h,u_h) \in F_h \times \Lambda_h \times U_h
	\text{\textit{ that verifies }}
\end{equation*}
\begin{equation}
	a_h(f_h,\varphi_h) - b_h^t(\lambda_h, \varphi_h) = 0
	, \quad \forall \varphi_h \in F_h,
\end{equation}
\begin{equation}
	- b_h(f_h,\mu_h) + e^t(u_h,\mu_h) = 0
	, \quad \forall \mu_h \in \Lambda_h,
\end{equation}
\begin{equation}
	e(\lambda_h,v_h) + g_h(u_h,v_h) = r_h(v_h)
    , \quad \forall v_h \in U_h,
\end{equation}
\end{subequations}

Specifically, in the numerical experiments that follow we define continuous piecewise-linear 
basis functions: $\{ \alpha_i\}_{i=0}^{N-1}$ on tetrahedral mesh $\Omega_h$ and $\{ \beta_i\}_{i=0}^{M-1}$ on triangular mesh $\mathcal{T}_h$.
That is, we set up the following function spaces,
\begin{subequations}
\renewcommand{\theequation}{\theparentequation.\arabic{equation}}
\label{eq:function_spaces}
	\begin{align}
		U_h = \Lambda_h := \mathrm{span}\{\alpha_0,..,\alpha_{N-1} \},   \\
		F_h := \mathrm{span}\{\beta_0,..,\beta_{M-1}\}.  
	\end{align}
\end{subequations}
The trial functions $u,\lambda \in H^1(\Omega)$, $f\in H^1(\Gamma_B)$ are approximated by 
functions $u_h \in U_h$, $\lambda_h \in \Lambda_h$, $f_h \in F_h$, respectively,
\begin{equation}
\label{eq:expansions}
		u_h = \sum_{i=0}^{N-1} u_i \alpha_i, \quad 
		\lambda_h = \sum_{i=0}^{N-1} \lambda_i \alpha_i,\quad 
		 f_h = \sum_{i=0}^{M-1} f_i \beta_i,\quad
\end{equation}
where $u_i,\lambda_i$, and $f_i$ are expansion coefficients (degrees of freedom, DOFs).
The test functions $v,\mu,\varphi$ are replaced by $v_h \in U_h$,$\mu_h \in \Lambda_h$ and $\varphi_h \in F_h$.
Thus, infinite-dimensional problem \eqref{eq:variational_system2} translates to the following 
algebraic problem,
\begin{equation}
\label{eq:fem_kkt}
	\underbrace{
		\left[
		\begin{array}{ccc}
			A  & -B^T & O  \\
			-B & O    &E^T \\
			O  & E    &G   \\
		\end{array}
		\right]
	}_{\mathcal{M}}	
	\underbrace{
		\left[
		\begin{array}{c}
			\tilde{f}  \\
			\tilde{\lambda}  \\ 
			\tilde{u}
		\end{array}
		\right]
	}_{\xi}		
		= 
	\underbrace{
		\left[
		\begin{array}{c}
			0  \\
			0  \\ 
			r
		\end{array}
		\right]
	}_{b}		
		,
\end{equation}
where 
\begin{subequations}
\label{eq:blocks_definition}
\renewcommand{\theequation}{\theparentequation.\arabic{equation}}
	\begin{align}
		A_{ij} = \varepsilon \int_{ \mathcal{T} } \nabla_B \beta_i \cdot \nabla_B \beta_j \, dS ,\\
		B_{ij} =             \int_{ \mathcal{T} } \beta_i \alpha_j \, dS ,\\
		E_{ij} =             \int_{ \Omega }  \sigma \alpha_i \alpha_j \, dV ,\\
		G = Q^T W^T W Q, \\
		r = Q^T W^T W d,
	\end{align}
\end{subequations}
where $Q$ is the matrix of observation operator $\mathcal{Q}_h:U_h \rightarrow \mathbb{R}^K$, $O$'s are zero blocks.
Vectors $\tilde{f}$, $\tilde{\lambda}$, and $\tilde{u}$ consist of DOFs $f_i,\lambda_i$, and $u_i$, respectively.

Blocks $A \in \mathbb{R}^{M\times M}$ and $E \in \mathbb{R}^{N\times N}$ are sparse positive-definite stiffness matrices. Matrix $B \in \mathbb{R}^{N\times M}$ is a sparse rectangular matrix of full column rank.
Matrix $G \in \mathbb{R}^{N \times N}$ is positive semidefinite. 
Its rank is equal to the number of electrodes minus 1 (we assume that there are no duplicate measurements).
In the presented form, $G$ is sparse. 
Moreover, if measuring electrodes are located at the mesh nodes, then $G$ will be diagonal.
However, one can replace $W^T W$ with a dense noise covariance matrix to account for correlated noise. 
In this case, block $G$ will be dense.
System matrix $\mathcal{M}$ is sparse symmetric indefinite.
Typically, $N >> M$, so the size of $\mathcal{M}$ is approximately $2N \times 2N$.

Note that a programming implementation of our method may be based on the standard procedures available in existing FEM libraries.
In particular, matrices $A$ and $E$ are nothing more than stiffness matrices; matrix $B$ is (a part of) a mass matrix.

\section{FEM convergence}
\label{sec6}

In this section, we demonstrate that finite-dimensional solution $(u_h, f_h, \lambda_h)$
converges to the infinite-dimensional solution $(u, f, \lambda)$.
We introduce the linear operators $\mathbb{A}_h,\mathbb{B}_h, \mathbb{E}_h, \mathbb{G}_h$ associated 
with the bilinear forms $a_h(\cdot,\cdot),b_h(\cdot,\cdot),e_h(\cdot,\cdot),g_h(\cdot,\cdot)$, respectively.
By $\langle \cdot,\cdot \rangle$ we will denote the pairing between corresponding function spaces.

The following properties can be verified,
\begin{equation}
\label{eq:infsup}
	\beta \norm{\lambda_h} 
	\leq 
	\Norm{-\mathbb{B}_h^T\lambda_h}{\mathbb{E}_h\lambda_h}, 
	\quad \text{ (the inf-sup condition)}
\end{equation}
\begin{equation}
\label{eq:B_continuity}
	\Norm{-\mathbb{B}_h^T\lambda_h}{\mathbb{E}_h\lambda_h} \leq \beta_2 \norm{\lambda}, 
	\quad \text{ (continuity)}
\end{equation}
\begin{equation}
	\label{eq:ellipticity}
	\alpha_1 \Norm{f_h}{u_h} \leq \langle \mathbb{A}_h f_h, f_h \rangle + \langle \mathbb{G}_h u_h, u_h \rangle,  
	\quad \text{ (ellipticity) }
\end{equation}
\begin{equation}
	\label{eq:A_continuity}
	\Norm{\mathbb{A}_h f_h}{\mathbb{G}_h u_h} \leq \alpha_2 \Norm{f_h}{u_h}.  \quad \text{ (continuity) }
\end{equation}

The following result bounds the norm of approximation error $(u-u_h, f-f_h, \lambda - \lambda_h)$ 
by the norm of the interpolation error.
Here we use the method of proving the estimates developed in \cite{Brezzi1974}.

\begin{proposition}

Let $(f,u,\lambda) \in F \times U \times \Lambda$ be a solution of \eqref{eq:variational_system2}, 
and $(f_h,u_h,\lambda) \in F_h \times U_h \times \Lambda_h$ is a solution to \eqref{eq:discrete_variational_system2}.
Then the misfit between the two solutions satisfies to the following estimates,
	\begin{subequations}
	\renewcommand{\theequation}{\theparentequation.\arabic{equation}}
		\label{eq:basic_estimate}
		\begin{equation}
			\label{eq:basic_estimate.1}
			\begin{split}
				\Norm{f-f_h}{u-u_h}
				\leq 
				\left(  1+\frac{\alpha_2}{\alpha_1}  \right)
				\left(  1+\frac{\beta_2}{\beta_1}  \right)
				\inf_{\varphi_h,v_h \in F_h \times U_h}
				\Norm{f-\varphi_h}{u-v_h}.
			\end{split}
		\end{equation}
		\begin{equation}
			\label{eq:basic_estimate.2}
			\begin{split}
				\norm{\lambda - \lambda_h} 
				\leq
				\left(  1+\frac{\beta_2}{\beta_1} \right)
				\inf_{\mu_h \in \Lambda_h}
				\norm{\lambda - \mu_h}
				\\				
				+ 
				\frac{\alpha_2}{\beta} 
				\left(  1+\frac{\alpha_2}{\alpha_1}  \right) 
				\left(  1+\frac{\beta_2}{\beta_1}  \right) 
				\inf_{\varphi_h,v_h \in F_h \times U_h}
				\Norm{f-\varphi_h}{u-v_h}			
			\end{split}
		\end{equation}
	\end{subequations}
\end{proposition}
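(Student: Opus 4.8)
The plan is to follow the classical Brezzi--Babu\v{s}ka saddle-point framework, treating the triple system \eqref{eq:variational_system2} as a two-field mixed problem with the ``outer'' field being $(f,u)$ (equipped with the graph-norm $\Norm{\cdot}{\cdot}$) and the ``inner'' field being the multiplier $\lambda$. In this grouping the bilinear form acting on $(f,u)\times(\varphi,v)$ is $\mathcal{A}\big((f,u),(\varphi,v)\big) := a(f,\varphi) + g(u,v)$, and the coupling form is $\mathcal{B}\big(\lambda,(\varphi,v)\big) := -b^t(\lambda,\varphi) + e(\lambda,v)$ (read appropriately on rows two and three). Hypotheses \eqref{eq:ellipticity} and \eqref{eq:A_continuity} say $\mathcal{A}_h$ is elliptic on the whole product space $F_h\times U_h$ with constants $\alpha_1,\alpha_2$ (note this is stronger than ellipticity merely on the kernel of $\mathcal{B}_h$, which simplifies the argument), while \eqref{eq:infsup} and \eqref{eq:B_continuity} give the discrete inf-sup condition and continuity for $\mathcal{B}_h$ with constants $\beta=\beta_1$ and $\beta_2$. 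Because $F_h\subset F$, $U_h\subset U$, $\Lambda_h\subset\Lambda$ are conforming, Galerkin orthogonality holds: subtracting \eqref{eq:discrete_variational_system2} from \eqref{eq:variational_system2} tested against discrete functions gives that the error $(f-f_h,u-u_h,\lambda-\lambda_h)$ is $\mathcal{A}$- and $\mathcal{B}$-orthogonal to $F_h\times U_h\times\Lambda_h$.

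First I would prove \eqref{eq:basic_estimate.1}. Fix arbitrary $(\varphi_h,v_h)\in F_h\times U_h$ and split the error as $(f-f_h) = (f-\varphi_h) + (\varphi_h-f_h)$, $(u-u_h) = (u-v_h)+(v_h-u_h)$, writing $e_h := (\varphi_h-f_h,\, v_h-u_h)\in F_h\times U_h$ for the discrete part. To bound $\Norm{\varphi_h-f_h}{v_h-u_h}$, use the discrete inf-sup condition to control the component of $e_h$ outside $\ker\mathbb{B}_h$ and ellipticity \eqref{eq:ellipticity} to control the component inside; combine with Galerkin orthogonality, which lets one replace $f_h,u_h,\lambda_h$ by the exact solution at the cost of interpolation-error terms $(f-\varphi_h,u-v_h)$ and $(\lambda-\mu_h)$. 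Carrying the constants through the two successive estimates (first inf-sup, then ellipticity, using continuity \eqref{eq:B_continuity} and \eqref{eq:A_continuity} to bound cross terms) produces the product factor $\left(1+\frac{\alpha_2}{\alpha_1}\right)\left(1+\frac{\beta_2}{\beta_1}\right)$; a triangle inequality then gives the stated bound with the infimum over $(\varphi_h,v_h)$. The multiplier terms that appear in the intermediate steps drop out of \eqref{eq:basic_estimate.1} because the inf-sup estimate is applied to the $\mathcal{B}_h$-equation, in which $\lambda$ is the \emph{test} function and hence only the $(f,u)$-data enters.

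Next I would prove \eqref{eq:basic_estimate.2} for the multiplier. Again fix $\mu_h\in\Lambda_h$, write $\lambda-\lambda_h = (\lambda-\mu_h)+(\mu_h-\lambda_h)$, and apply the discrete inf-sup condition \eqref{eq:infsup} to the discrete part $\mu_h-\lambda_h\in\Lambda_h$: there is a test pair $(\varphi_h,v_h)\in F_h\times U_h$ of unit norm with $\beta\norm{\mu_h-\lambda_h}\le \mathcal{B}_h(\mu_h-\lambda_h,(\varphi_h,v_h))$. Using Galerkin orthogonality to rewrite $\mathcal{B}_h(\lambda_h,\cdot)$ in terms of the $\mathcal{A}$-part and the data, one gets $\mathcal{B}_h(\mu_h-\lambda_h,(\varphi_h,v_h)) = \mathcal{B}(\mu_h-\lambda,(\varphi_h,v_h)) + \mathcal{A}\big((f-f_h,u-u_h),(\varphi_h,v_h)\big)$; bounding the first term by continuity \eqref{eq:B_continuity} gives $\beta_2\norm{\lambda-\mu_h}$ and the second by continuity \eqref{eq:A_continuity} gives $\alpha_2\Norm{f-f_h}{u-u_h}$. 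Dividing by $\beta$, inserting the already-proven bound \eqref{eq:basic_estimate.1} for $\Norm{f-f_h}{u-u_h}$, and a final triangle inequality $\norm{\lambda-\lambda_h}\le\norm{\lambda-\mu_h}+\norm{\mu_h-\lambda_h}$ yields the two-term estimate with the factor $1+\frac{\beta_2}{\beta_1}$ on the interpolation term and $\frac{\alpha_2}{\beta}\left(1+\frac{\alpha_2}{\alpha_1}\right)\left(1+\frac{\beta_2}{\beta_1}\right)$ on the propagated term, as claimed.

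The main obstacle I anticipate is bookkeeping rather than conceptual: one must be careful about which variable plays the role of ``primal'' versus ``multiplier'' in each of the three equations (the form $b$ couples $f$ with $\lambda$ while $e$ couples $u$ with $\lambda$, so the $\lambda$-equation and the $(f,u)$-equations see the coupling transposed), and one must verify that the full-space ellipticity \eqref{eq:ellipticity}--\eqref{eq:A_continuity} is genuinely available (it is, since $a$ is coercive on $F$ modulo constants and $g$ is positive semidefinite, so their sum is coercive on $F\times U$ in the graph norm once the constant-function ambiguity in $F$ is factored out — this is exactly why $F$ excludes constants). A secondary subtlety is that $\mathbb{G}_h$ is only positive semidefinite, so ellipticity of $\mathcal{A}_h$ must be read in the $\Norm{f_h}{u_h}$-norm that it actually controls; one should confirm \eqref{eq:ellipticity} is stated in precisely that norm, which it is. Modulo these care points the argument is the standard one from \cite{Brezzi1974}, and the constants fall out exactly in the advertised form.
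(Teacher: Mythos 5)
Your proposal is correct and follows essentially the same route as the paper: the classical Brezzi--Babu\v{s}ka saddle-point analysis from \cite{Brezzi1974} with $(f,u)$ as the primal pair and $\lambda$ as the multiplier, combining Galerkin orthogonality, ellipticity \eqref{eq:ellipticity} with continuity \eqref{eq:A_continuity} for the primal error, and the inf-sup condition \eqref{eq:infsup} with continuity \eqref{eq:B_continuity} for the multiplier, yielding the same constants (with $\beta=\beta_1$). The only cosmetic difference is that the paper organizes the argument through the affine kernel sets $K$, $K^*$ and then passes from the infimum over $K$ to the full product space via the Braess-type step, which your direct decomposition reproduces implicitly.
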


\begin{proof}

The error $(f-f_h,u-u_h,\lambda-\lambda_h)$ satisfies the following set of equations,
\begin{subequations}
\renewcommand{\theequation}{\theparentequation.\arabic{equation}}
\label{eq:2x2_err}
	\begin{equation}
		\label{eq:2x2_err.1}
		a_h(f-f_h, \varphi_h) + g_h(u-u_h,v_h)
		-b_h^t(\lambda-\lambda_h,\varphi_h) + e_h(\lambda-\lambda_h,v_h)
		= 0 , \quad \forall \varphi_h, v_h \in F_h \times U_h,
	\end{equation}
	\begin{equation}	
		\label{eq:2x2_err.2}
		-b_h(f-f_h,\mu_h) + e_h^t(u-u_h,\mu_h) = 0, \quad \forall \mu_h \in \Lambda_h.
	\end{equation}
\end{subequations}

Let us define the space of all functions $(\varphi,v) \in F_h \times U_h$ verifying equation \eqref{eq:2x2_err.2},
	\begin{equation}
		K 
		:= \{
			\varphi \times v \in F_h \times U_h :
		-b_h (f-\varphi, \mu_h ) + e_h^t (u-v, \mu_h )
		= 0, \quad \forall \mu_h \in \Lambda_h
		\}
		,
	\end{equation}
Similarly, we define a space of all functions $\mu \in \Lambda_h$ satisfying the transposed equation \eqref{eq:2x2_err.2},
	\begin{equation}
		K^*
		:= \{
			\mu  \in  \Lambda_h:
		-b_h^t(\lambda-\mu,\varphi_h ) +e_h(\lambda-\mu, v_h)
		=0,	\quad \forall \varphi_h, v_h \in F_h \times U_h
		\}
		.
	\end{equation}
Now we choose arbitrary $(f_*,v_*) \in K$ and $\lambda_* \in K^*$.
Equations \eqref{eq:2x2_err} turn into
\begin{subequations}
\renewcommand{\theequation}{\theparentequation.\arabic{equation}}
\label{eq:2x2_diff}
	\begin{equation}
		\label{eq:2x2_diff.1}
		\begin{split}
			a_h (f_h-f_*, \varphi_h ) + g_h (u_h-u_*,v_h)
			-b_h^t (\lambda_h-\lambda_*,\varphi_h ) + e_h (\lambda_h-\lambda_*, v_h)
			= 
			\\
			a_h (f-f_*, \varphi_h ) + g_h (u-u_*,v_h)
			\quad \forall \varphi_h, v_h \in F_h \times U_h,
		\end{split}
	\end{equation}
	\begin{equation}	
		\label{eq:2x2_diff.2}
		-b_h (f_h-f_*, \mu_h ) + e_h^t (u_h-u_*, \mu_h )
		= 0, \quad \forall \mu_h \in \Lambda_h.
	\end{equation}
\end{subequations}
From the whole set of admissible $(\varphi_h,v_h, \mu_h)$ we select the very special one,
$\varphi_h = f_h-\varphi_*$, $v_h = u_h-v_*$, $\mu_h = \lambda_h-\lambda_*$. 
With this choice, it follows from \eqref{eq:2x2_diff},
	\begin{equation}
		\begin{split}		
			a_h(f_h-\varphi_*,f_h-\varphi_*) + g_h(u_h-v_*,u_h-v_*)
			=
			a_h(f-\varphi_*,f_h-\varphi_*) + g_h(u-v_*,u_h-v_*).
		\end{split}
	\end{equation}
Thus, 
	\begin{equation}
		\begin{split}		
			\norm{f_h-\varphi_*}^2 + \norm{u-v_*}^2
			\leq 
			\frac{1}{\alpha_1}
			\Big( 
				a_h(f_h-\varphi_*,f_h-\varphi_*) + g_h(u_h-v_*,u_h-v_*)
			\Big)
			= \\
			\frac{1}{\alpha_1}
			\Big( 
				a_h(f_h-\varphi_*,f-\varphi_*) + g_h(u_h-v_*,u-v_*)
			\Big)			
			\leq  \\
			\frac{\alpha_2}{\alpha_1} \Norm{f-\varphi_*}{u-v_*} \Norm{f_h-\varphi_*}{u_h-v_*},
		\end{split}
	\end{equation}
and 
	\begin{equation}
			\Norm{f_h-\varphi_*}{u_h-v_*}
			\leq 
			\frac{1}{\alpha_1}
			\frac{\alpha_2}{\alpha_1} \Norm{f-\varphi_*}{u-v_*}.
	\end{equation}
	From the triangle inequality it follows 
	\begin{equation}
			\Norm{f-f_h}{u-u_h}
			\leq 
			\left(  1+\frac{\alpha_2}{\alpha_1}  \right)
			\Norm{f-\varphi_*}{u-v_*}.
	\end{equation}

From \eqref{eq:2x2_diff.1} it follows that 
	\begin{equation}
		-b_h^t (\lambda_h-\lambda_*,\varphi_h ) + e_h (\lambda_h-\lambda_*, v_h) = \\
		a_h (f-f_h, \varphi_h ) + g_h (u-u_h,v_h).
	\end{equation}
Thus, 
	\begin{equation}
		\begin{split}		
			\norm{\lambda_h - \lambda_*} \leq \\ 
			\frac{1}{\beta_1} \Norm{-\mathbb{B}_h(\lambda_h-\lambda_*)}{\mathbb{E}_h(\lambda_h-\lambda_*)}
			\leq \\
			\frac{1}{\beta_1} \Norm{-\mathbb{A}_h(f_h-f_*)}{\mathbb{G}_h(u_h-u_*)}
			\leq \\
			\alpha_2 \Norm{f-f_h}{u-u_h}
		\end{split}
	\end{equation}
and so	
	\begin{equation}
		\begin{split}		
			\norm{\lambda - \lambda_h} \leq
			\norm{\lambda - \lambda_*} + \norm{\lambda_h - \lambda_*}
			\leq \\
			\norm{\lambda - \lambda_*} + 
			\frac{\alpha_2}{\beta_1} \left(  1+\frac{\alpha_2}{\alpha_1}  \right) \Norm{f-f_*}{u-u_*}			
		\end{split}
	\end{equation}

Since $(f_*,u_*,\mu_*)$ is arbitrary, we have the following estimates,
	\begin{subequations}
	\renewcommand{\theequation}{\theparentequation.\arabic{equation}}
		\label{eq:basic_estimate_K}
		\begin{equation}
			\label{eq:basic_estimate_K.1}
				\Norm{f-f_h}{u-u_h}
				\leq 
				\left(  1+\frac{\alpha_2}{\alpha_1}  \right)
				\inf_{\varphi_h \times v_h \in K}
				\Norm{f-\varphi_h}{u-v_h}.
		\end{equation}
		\begin{equation}
			\label{eq:basic_estimate_K.2}
			\begin{split}
				\norm{\lambda - \lambda_h} 
				\leq
				\inf_{\mu_h \in K^*}
				\norm{\lambda - \mu_h} + 
				\frac{\alpha_2}{\beta_1} \left(  1+\frac{\alpha_2}{\alpha_1}  \right) 
				\inf_{\varphi_h \times v_h \in K}
				\Norm{f-\varphi_h}{u-v_h}			
			\end{split}
		\end{equation}
	\end{subequations}
Now, proceeding as in \cite{Braess}[Remark 4.10] we can bound the infimum over $K$ by the infimum over $\mathbb{G}_h \times U_h$,
	\begin{equation}
		\label{eq:basic_estimate_K.1}
		\inf_{\varphi_h, v_h \in K}
			\Norm{f-f_h}{u-u_h}
			\leq 
			\left(  1+\frac{\beta_2}{\beta_1}  \right)
			\inf_{\varphi_h \times v_h \in F_h \times U_h}
			\Norm{f-\varphi_h}{u-v_h}.
	\end{equation}
Similarly, 
	\begin{equation}
		\label{eq:basic_estimate_K.1}
		\inf_{\lambda_h \in K^*}
			\norm{\lambda-\lambda_h}
			\leq 
			\left(  1+\frac{\beta_2}{\beta_1}  \right)
			\inf_{\lambda_h \in \Lambda_h}
			\norm{\lambda-\lambda_h}.
	\end{equation}
Thus, the estimates \eqref{eq:basic_estimate.1} and \eqref{eq:basic_estimate.2} follow.

\end{proof}

This result shows that the norm of approximation error is bounded by the norm of the interpolation error,
\begin{equation}
	\label{eq:final_estimate}
	\begin{split}
		\NormTri{f-f_h}{u-u_h}{\lambda-\lambda_h}
		\leq 
		C
		\inf_
		{
			\substack{\varphi_h \in F_h \\ v_h \in U_h \\ \lambda_h \in \Lambda_h}
		}
		\NormTri{f-\varphi_h}{u-v_h}{\lambda-\mu_h}.
	\end{split}
\end{equation}
We use linear node-based elements on tetrahedral grids for all of $f,u,\lambda$.
We apply the classical results on finite-element approximation in Sobolev spaces \cite{Ciarlet}
to \eqref{eq:final_estimate} to get the following estimate.

\begin{proposition}
Let $(f,u,\lambda) \in F \times U \times \Lambda$ be a solution of \eqref{eq:variational_system2}, 
and $(f_h,u_h,\lambda) \in F_h \times U_h \times \Lambda_h$ is a solution to \eqref{eq:discrete_variational_system2}.
Then the error satisfies
\begin{equation}
	\begin{split}
		\NormTri{f-f_h}{u-u_h}{\lambda-\lambda_h} 
		\leq \\
		C h
		\left(
		|f|^2_{H^2(\Gamma_B)}
		+
		|u|^2_{H^2(\Omega)}
		+
		|\lambda|^2_{H^2(\Omega)}
		\right)^{1/2},
	\end{split}
\end{equation}
where $|\cdot|$ stands for a corresponding seminorm, $C$ is a constant.

\end{proposition}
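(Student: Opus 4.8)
The plan is to combine the abstract quasi-optimality bound from the previous proposition, namely estimate~\eqref{eq:final_estimate}, with standard finite-element interpolation theory in Sobolev spaces. Since the right-hand side of \eqref{eq:final_estimate} is an infimum over $F_h\times U_h\times\Lambda_h$, it suffices to exhibit \emph{one} convenient choice of $(\varphi_h,v_h,\mu_h)$ and bound the resulting norm; the natural choice is the nodal (Lagrange) interpolant. Concretely, I would take $\varphi_h=\mathcal{I}_h^{\Gamma_B} f\in F_h$, the piecewise-linear interpolant of $f$ on the triangular mesh $\mathcal{T}_h$, and $v_h=\mathcal{I}_h^{\Omega}u$, $\mu_h=\mathcal{I}_h^{\Omega}\lambda\in U_h=\Lambda_h$, the piecewise-linear interpolants of $u$ and $\lambda$ on the tetrahedral mesh $\Omega_h$.

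The key steps, in order, are as follows. First, invoke \eqref{eq:final_estimate} to reduce the error estimate to bounding $\NormTri{f-\varphi_h}{u-v_h}{\lambda-\mu_h}$ for the above interpolants, since each $H^1$-norm here dominates the corresponding seminorm and (on the bounded, regular mesh) the full norm is controlled in the same way. Second, apply the classical interpolation error estimate (e.g.\ Ciarlet~\cite{Ciarlet}, or Bramble--Hilbert): for a regular family of simplicial meshes and continuous piecewise-linear elements, $\norm{w-\mathcal{I}_h w}_{H^1}\le C\,h\,|w|_{H^2}$ for $w\in H^2$, applied on $\Omega_h$ for $u$ and $\lambda$ and on $\mathcal{T}_h$ (the surface triangulation of $\Gamma_B$) for $f$. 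The regularity hypotheses $u,\lambda\in H^2(\Omega)$ and $f\in H^2(\Gamma_B)$ are exactly the seminorms that appear on the right-hand side. Third, sum the three squared bounds, take square roots, and absorb the mesh-independent constants $1+\alpha_2/\alpha_1$, $1+\beta_2/\beta_1$ from \eqref{eq:final_estimate} together with the interpolation constants into a single constant $C$, yielding the stated estimate.

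The main obstacle — and the one place where care beyond routine bookkeeping is needed — is the surface interpolation term for $f$: the standard interpolation estimates are stated for subsets of $\mathbb{R}^n$, so one must either invoke a version valid on a (smooth) $2$-manifold $\Gamma_B$ with its induced metric and surface gradient $\nabla_B$, or pass to local coordinate charts, use the planar estimate on each chart, and control the extra terms coming from the chart Jacobians; the smoothness assumption on $\Gamma_B$ made earlier in the paper is what makes this work and keeps those Jacobian factors bounded uniformly. A secondary technical point is that $U_{ad}$ requires $u\in\mathcal C(\bar\Omega)$ so that the nodal interpolant is even well defined; since $H^2(\Omega)\hookrightarrow\mathcal C(\bar\Omega)$ in three dimensions, the assumed $H^2$-regularity of $u$ (and of $\lambda$) resolves this, and similarly $H^2(\Gamma_B)\hookrightarrow\mathcal C(\Gamma_B)$ handles $f$. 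Everything else is a direct substitution into \eqref{eq:final_estimate}.
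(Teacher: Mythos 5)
Your proposal follows the same route as the paper: the paper simply applies the quasi-optimality bound \eqref{eq:final_estimate} together with the classical interpolation estimates of \cite{Ciarlet} for piecewise-linear elements, exactly as you do. Your additional remarks on interpolating $f$ on the surface $\Gamma_B$ and on the $H^2\hookrightarrow\mathcal{C}$ embedding needed for nodal interpolation are careful refinements of details the paper leaves implicit, but the argument is essentially identical.
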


%

\section{Numerical experiments}
\label{sec7}

In this section, we demonstrate that the solution of variational system \eqref{eq:variational_system2} 
provide a viable estimate of cortical activity.
Our programming implementation is based on C++ finite-element library MFEM \citep{mfem}.
The matrix factorization was performed by a sparse direct solver UMFPACK \citep{UMFPACK}.

In the first series of calculations, we reconstructed the electric potential in a spherical shell with unit conductivity.
The inner and outer radii were $0.7$ and $1.0$, respectively.
The shell was split into $86K$ tetrahedra.
The potential inside the shell was represented as a finite series of spherical harmonics.
The coefficients of the series were selected so that $u$ satisfies Laplace's equation in the shell, condition $\frac{\partial u}{\partial \nu}=0$ on the outer sphere, and $u=\xi$ on the inner sphere, were $\xi$ is some predefined distribution of potential.
See \citep{Malovichko2020} for more details.
Potential on the inner shell formed a cross of unit amplitude (Figure~\ref{fig:spherical_shell_true}c).
Potential on the outer shell is depicted in Figure~\ref{fig:spherical_shell_true}a.
Synthetic EEG data were sampled at 198 electrodes evenly scattered on the hemisphere $y>0$ (Figure~\ref{fig:spherical_shell_true}b).

In the first numerical experiment, we perform source reconstruction for several values of $\varepsilon$ from $10^{-12}$ to $10^{-7}$.
The data were noise-free.
As expected, the smaller $\varepsilon$, the rougher cortical activity is reconstructed, see Figure~\ref{fig:spherical_shell}.
We also observe that as $\varepsilon$ is getting smaller, the changes in the estimated cortical activity are less visible.
In particular, the reconstructed values for $\varepsilon=10^{-11}$ 
and $\varepsilon=10^{-12}$ are very close to each other.
The norm of the data residual (the difference between the input and calculated data) is plotted in Figure~\ref{fig:rmse}.
\begin{figure}[!htb]
\centering
	\begin{tabular}{ccc}
		\includegraphics[width=2.in]{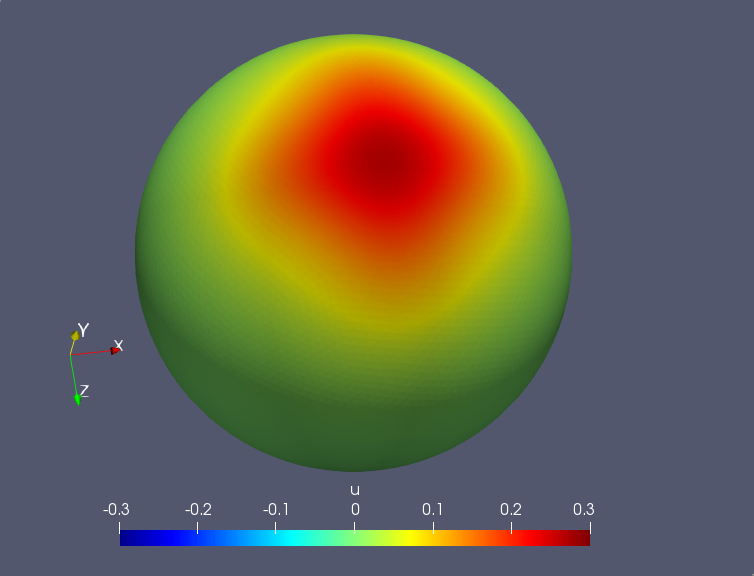} &
		\includegraphics[width=2.in]{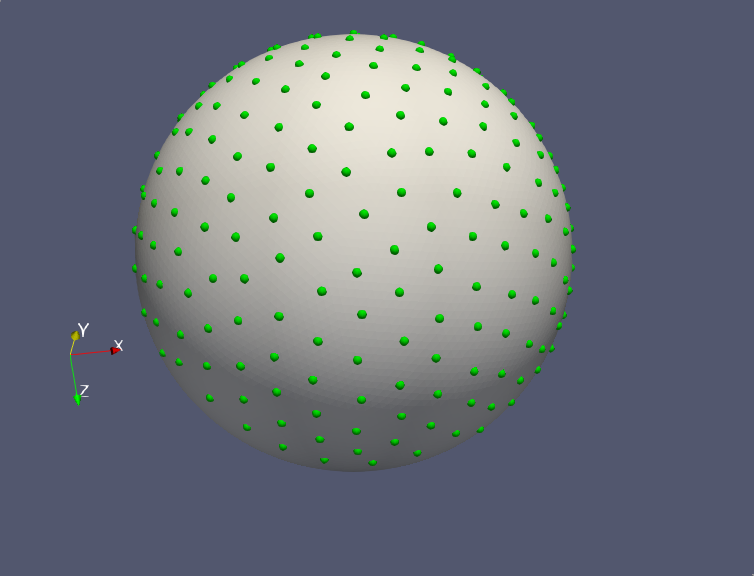} &
		\includegraphics[width=2.in]{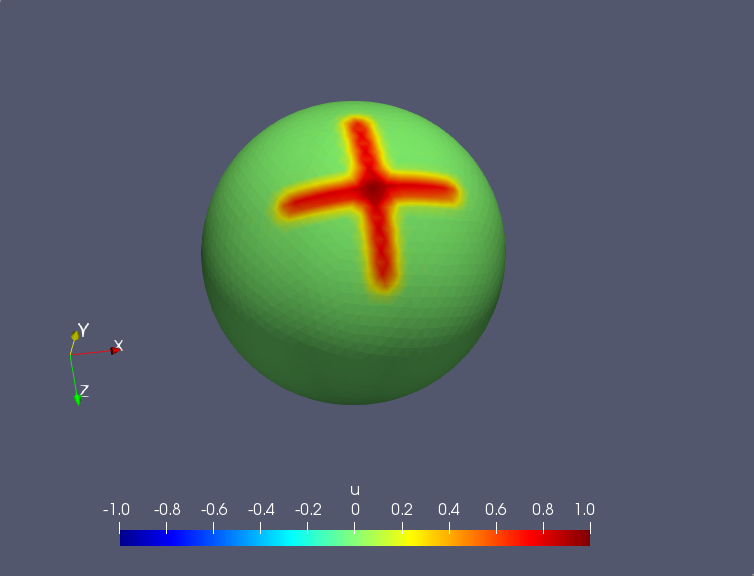} \\
		(a) & 
		(b) & 
		(c) \\
	\end{tabular}
	\caption
	{
		Synthetic EEG data in a spherical shell.
		(a) Electric potential on the outer sphere (radius 1.0).
		(b) 198 electrodes distributed on the outer half sphere $y>0$.
		(c) Electric potential on the inner sphere (radius 0.7).
	}
\label{fig:spherical_shell_true}
\end{figure}
\begin{figure}[!htb]
\centering
	\begin{tabular}{ccc}
		\includegraphics[width=1.5in]{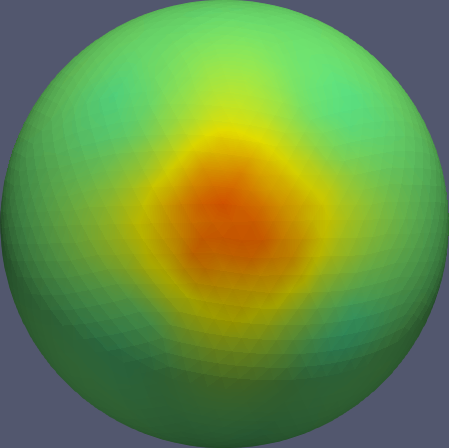} &
		\includegraphics[width=1.5in]{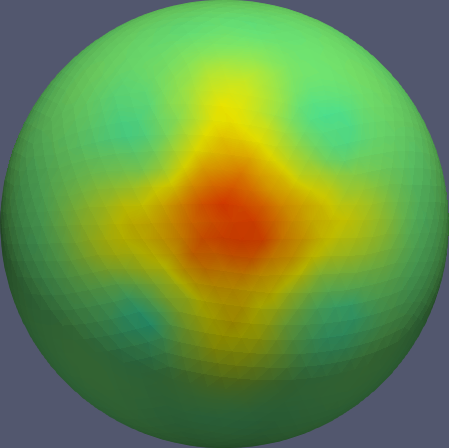} &
		\includegraphics[width=1.5in]{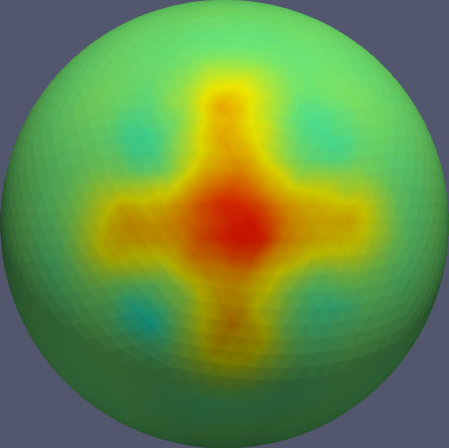} \\
		(a) $\varepsilon=10^{-7}$ & 
		(b) $\varepsilon=10^{-8}$ & 
		(c) $\varepsilon=10^{-9}$ \\
		\includegraphics[width=1.5in]{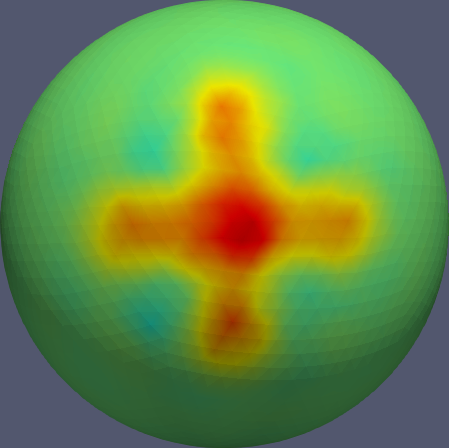} &
		\includegraphics[width=1.5in]{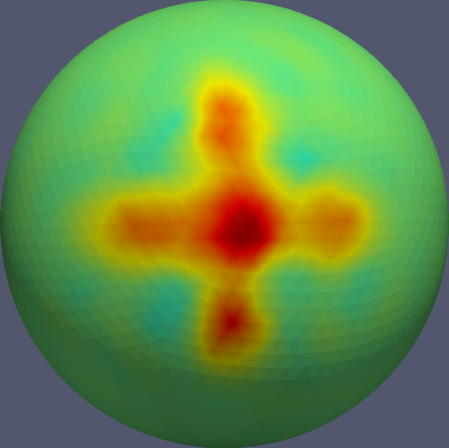} &
		\includegraphics[width=1.5in]{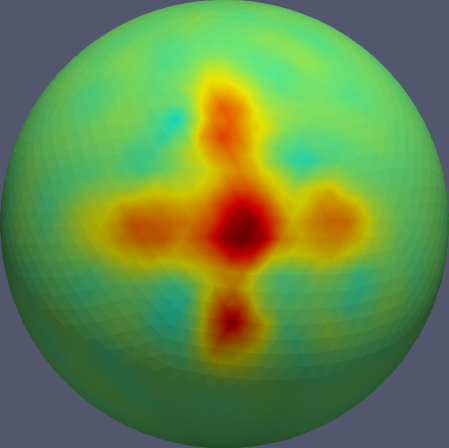} \\
		(d) $\varepsilon=10^{-10}$ & 
		(e) $\varepsilon=10^{-11}$ & 
		(f) $\varepsilon=10^{-12}$ \\
	\end{tabular}
	\caption
	{
		Electric potential reconstructed on the inner boundary of the spherical shell for different values of regularization parameter $\varepsilon$.
		The color scale is the same as in Figure~\ref{fig:spherical_shell_true}c.
		The layout consisted of 198 electrodes evenly distributed on the outer half-sphere centered at the cross.
		The input data were noise-free.
	}
\label{fig:spherical_shell}
\end{figure}
\begin{figure}[!htb]
\centering
		\includegraphics[width=4.in]{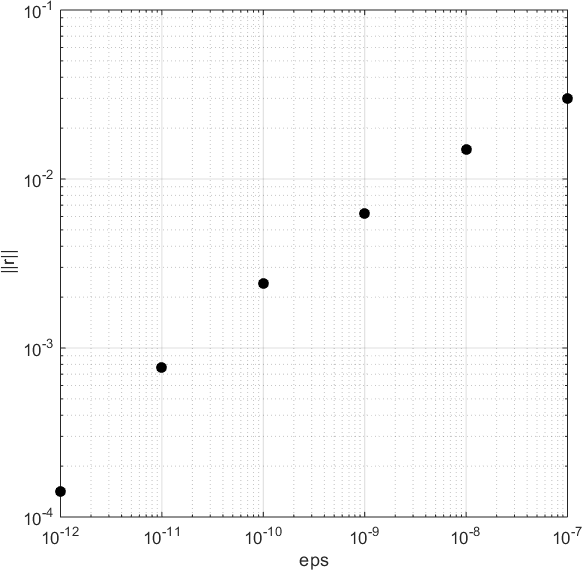}
	\caption
	{
		The norm of the data residual $||r||$ as a function of $\varepsilon$.
	}
\label{fig:rmse}
\end{figure}

\newpage

In the next numerical experiment, we added noise to the synthetic EEG data.
At each electrode position, the noise was Gaussian with the zero mean and the standard deviation equal to $1\%$ of the amplitude of the true EEG signal at that point.
Resulting reconstructions are presented in Figure~\ref{fig:spherical_shell_noise0.01}.
The reconstructed distributions remained very similar to those without noise, although the images appeared distorted due to the impact of the noise.
To evaluate the data fit, we computed the root-mean-square error (RMSE) as
\begin{equation}
	e = \sqrt{    \frac{1}{K}\sum_{i=0}^{K-1} \frac{(d_i-\hat{u}_i)^2}{s^2_i}  }.
\end{equation}
Here $s_i$ are the estimated values of the standard deviation of the noise at the $i$-th electrode, 
$\hat{u}_i$ are the calculated values of electric potential at the end of reconstruction. 
Value $e=1$ means that the data fit is close to optimal: on average, it corresponds to the noise level.
The plot of $e$ of $\varepsilon$ for the $1\%$ noise is given in Figure~\ref{fig:rmse_noise0.01}.
In this sense, the optimal value of regularization is $\varepsilon=10^{-9}$.
Below that level, the data are over-fitted, and so the image of cortical activity on the cortex gets progressively distorted.

\begin{figure}[!htb]
\centering
	\begin{tabular}{ccc}
		\includegraphics[width=1.5in]{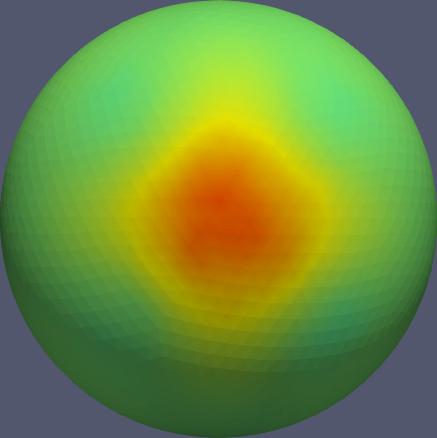} &
		\includegraphics[width=1.5in]{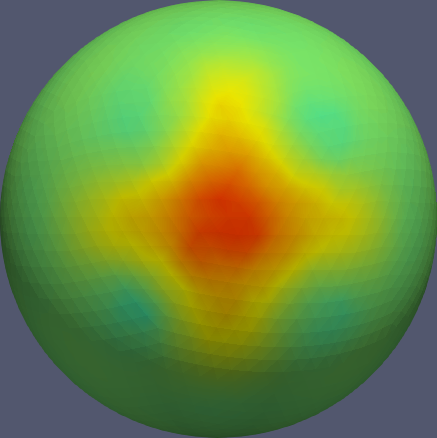} &
		\includegraphics[width=1.5in]{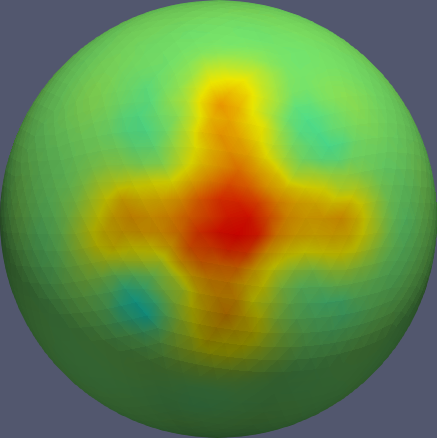} \\
		(a) $\varepsilon=10^{-7}$ & 
		(b) $\varepsilon=10^{-8}$ & 
		(c) $\varepsilon=10^{-9}$ \\
		\includegraphics[width=1.5in]{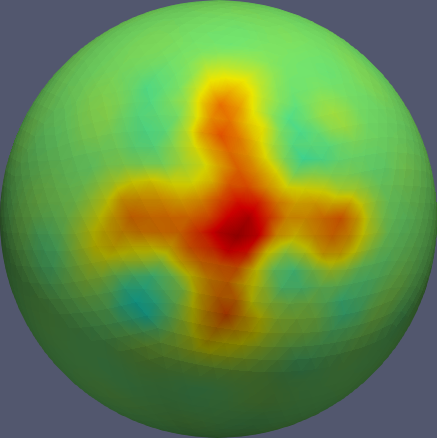} &
		\includegraphics[width=1.5in]{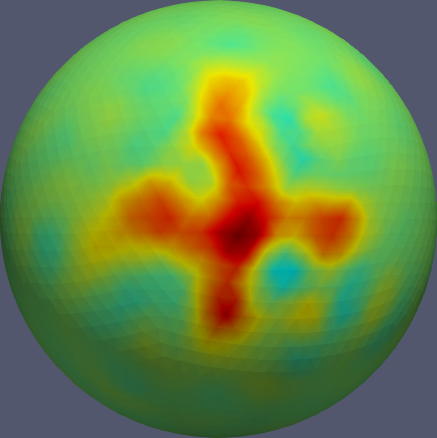} &
		\includegraphics[width=1.5in]{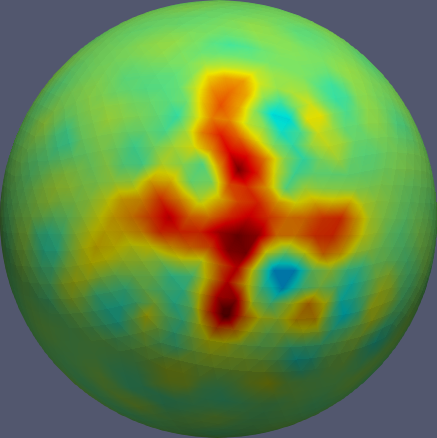} \\
		(d) $\varepsilon=10^{-10}$ & 
		(e) $\varepsilon=10^{-11}$ & 
		(f) $\varepsilon=10^{-12}$ \\
	\end{tabular}
	\caption
	{
		Electric potential reconstructed on the inner boundary of the spherical shell from the data contaminated with $1\%$ noise.
		The color scale is the same as in Figure~\ref{fig:spherical_shell_true}c.
	}
\label{fig:spherical_shell_noise0.01}
\end{figure}

\begin{figure}[!htb]
\centering
		\includegraphics[width=4.in]{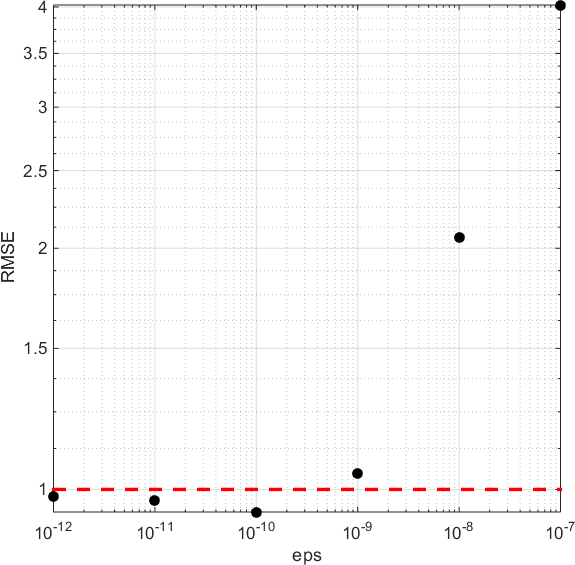}
	\caption
	{
		The root-mean-square error $e$ of data contaminated with the $1\%$ noise as a function of $\varepsilon$.
		The dashed red line marks the value $e$=1.
	}
\label{fig:rmse_noise0.01}
\end{figure}

\newpage

We repeated the same calculation with the $5\%$ noise. 
The reconstructed distribution of $u$ is presented in Figure~\ref{fig:spherical_shell_noise0.05}.
The misfit is plotted in Figure~\ref{fig:rmse_noise0.05}.
We observe that a higher regularization is needed for the $5\%$ noise compared to the $1\%$ one.
The spatial resolution of the reconstructed distributions as lower, as expected.
\begin{figure}[!htb]
\centering
	\begin{tabular}{ccc}
		\includegraphics[width=1.5in]{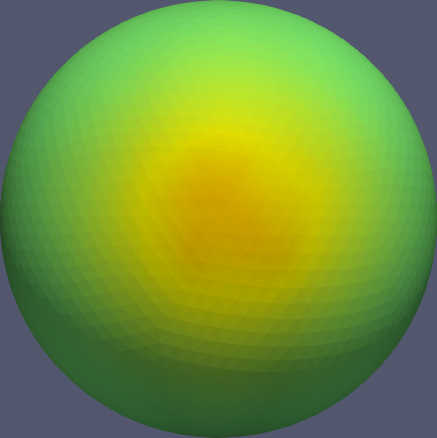} &
		\includegraphics[width=1.5in]{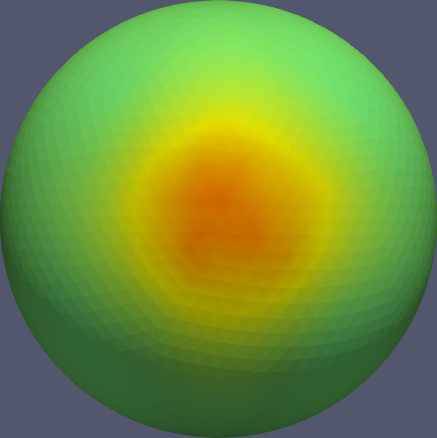} &
		\includegraphics[width=1.5in]{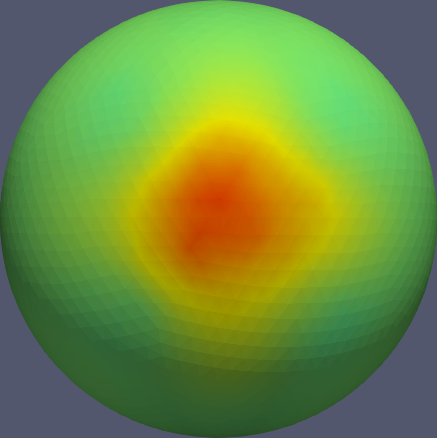} \\
		(a) $\varepsilon=10^{-5}$ & 
		(b) $\varepsilon=10^{-6}$ & 
		(c) $\varepsilon=10^{-7}$ \\
		\includegraphics[width=1.5in]{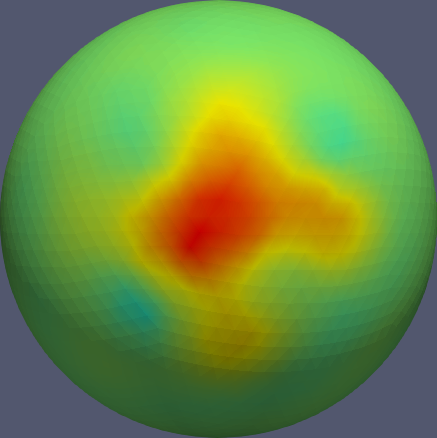} &
		\includegraphics[width=1.5in]{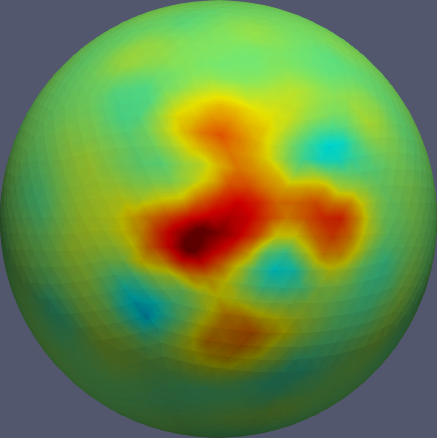} &
		\includegraphics[width=1.5in]{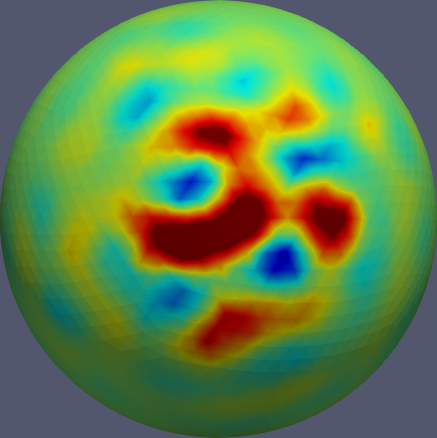} \\
		(d) $\varepsilon=10^{-8}$ & 
		(e) $\varepsilon=10^{-9}$ & 
		(f) $\varepsilon=10^{-10}$ \\
	\end{tabular}
	\caption
	{
		Electric potential reconstructed on the inner boundary of the spherical shell from the data contaminated with $5\%$ noise.
		The color scale is the same as in Figure~\ref{fig:spherical_shell_true}c.
	}
\label{fig:spherical_shell_noise0.05}
\end{figure}
\begin{figure}[!htb]
\centering
		\includegraphics[width=4.in]{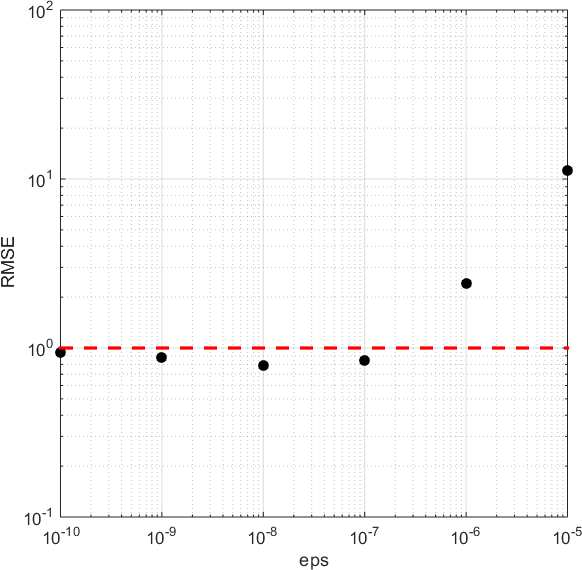}
	\caption
	{
		The root-mean-square error $e$ of data contaminated with the $5\%$ noise as a function of $\varepsilon$.
		The dashed red line marks the value $e$=1.
	}
\label{fig:rmse_noise0.05}
\end{figure}

\newpage

As proof of the concept, we applied our algorithm to a realistic head model.
We employed the mesh described in \citep{Malovichko2020}.
The model consisted of four compartments to which the following conductivities were assigned: 
skin 0.33 S/m, skull 0.011 S/m, CSF 1.0 S/m, and brain 0.33 S/m.
The mesh contained 587,882 tetrahedra.
We set up two active patches in the visual cortex containing 146 unit dipoles.
The dipoles were located in the barycenters of the tetrahedra. 
Orientations of the dipoles were normal to the cortex.
The forward problem was solved by FEM using the standard $H^1$-conforming finite elements of the first order.
Synthetic EEG data were obtained by sampling the potential at 175 electrodes extracted from 
256-electrode Geodesic Sensor Net montage by Philips.
We adjusted electrode locations to make them coincide with some grid vertices.
The mesh, brain activations, simulated potential, and electrode locations are depicted in Figure~\ref{fig:experiment1}.

The mesh used for reconstruction was created from the same surfaces, but the brain was replaced with a void.
We changed meshing parameters to make the mesh differ from the original one, thus avoiding the so-called inverse crime.
The final grid consisted of 658,513 tetrahedra.
Reconstruction was performed using the two values of $\varepsilon$ : $10^{-3}$ and $10^{-4}$.
Resulted distributions of potential on the cortex are given in Figure~\ref{fig:result1}.

The reconstructed cortical activity is consistent with the location of the actual sources.  
Predictably, the reconstruction for bigger $\varepsilon$ is more smeared.

\begin{figure*}
  \centering
  \begin{tabular}[b]{lll}
    \includegraphics[height=0.3\textwidth]{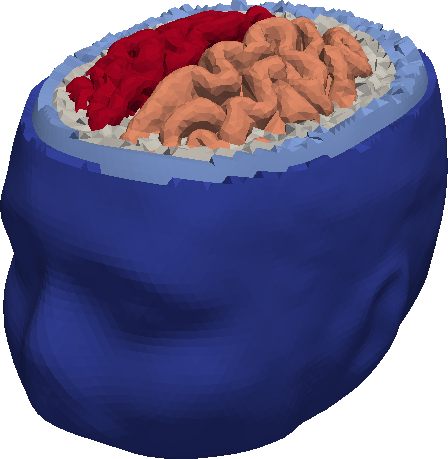} & &
    \includegraphics[height=0.25\textwidth]{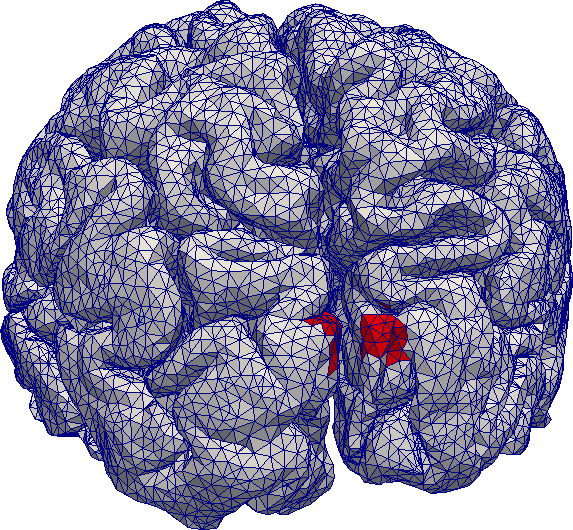} 
	\\
    \multicolumn{1}{c}{(a)}	& &	\multicolumn{1}{c}{(b)}
	\\
    \includegraphics[height=0.35\textwidth]{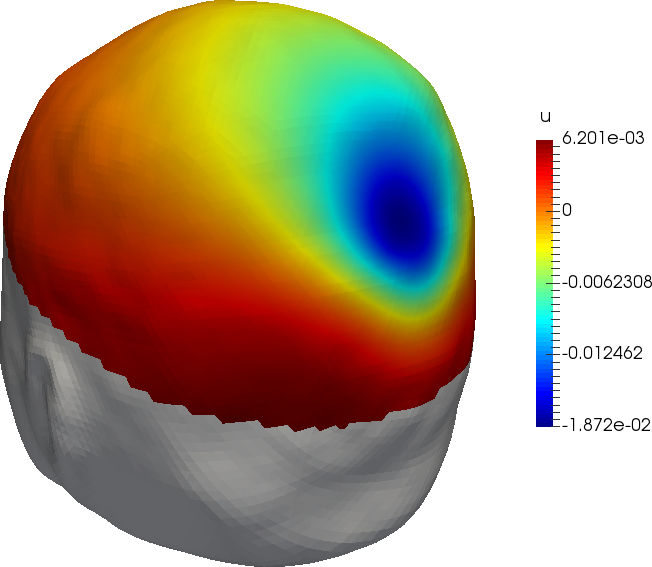} & &
    \includegraphics[height=0.35\textwidth]{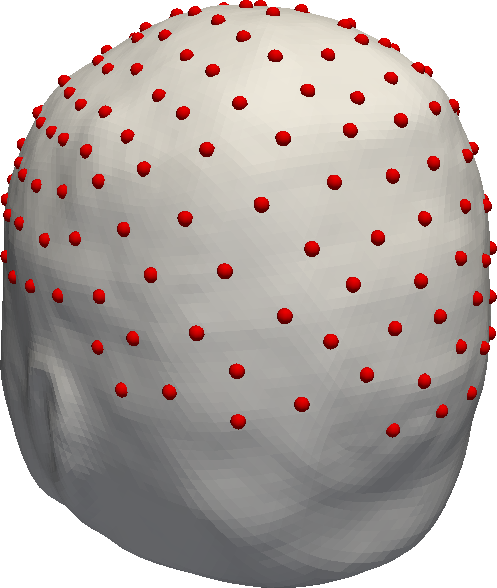} 
	\\
    \multicolumn{1}{c}{(c)}& & \multicolumn{1}{c}{(d)}
  \end{tabular}
    \caption
	{
		Simulated EEG data used in the numerical experiment.
		Panel (a). Tetrahedral  mesh used for simulating EEG data. 
		Panel (b). Prescribed activations in the visual cortex.
		Panel (c). Simulated potential on the scalp.
		Panel (d). 175 electrodes extracted from a 256-electrode montage.
    }
    \label{fig:experiment1}
\end{figure*}

\begin{figure*}
  \centering
  \begin{tabular}[b]{c}
    \includegraphics[width=0.45\textwidth]{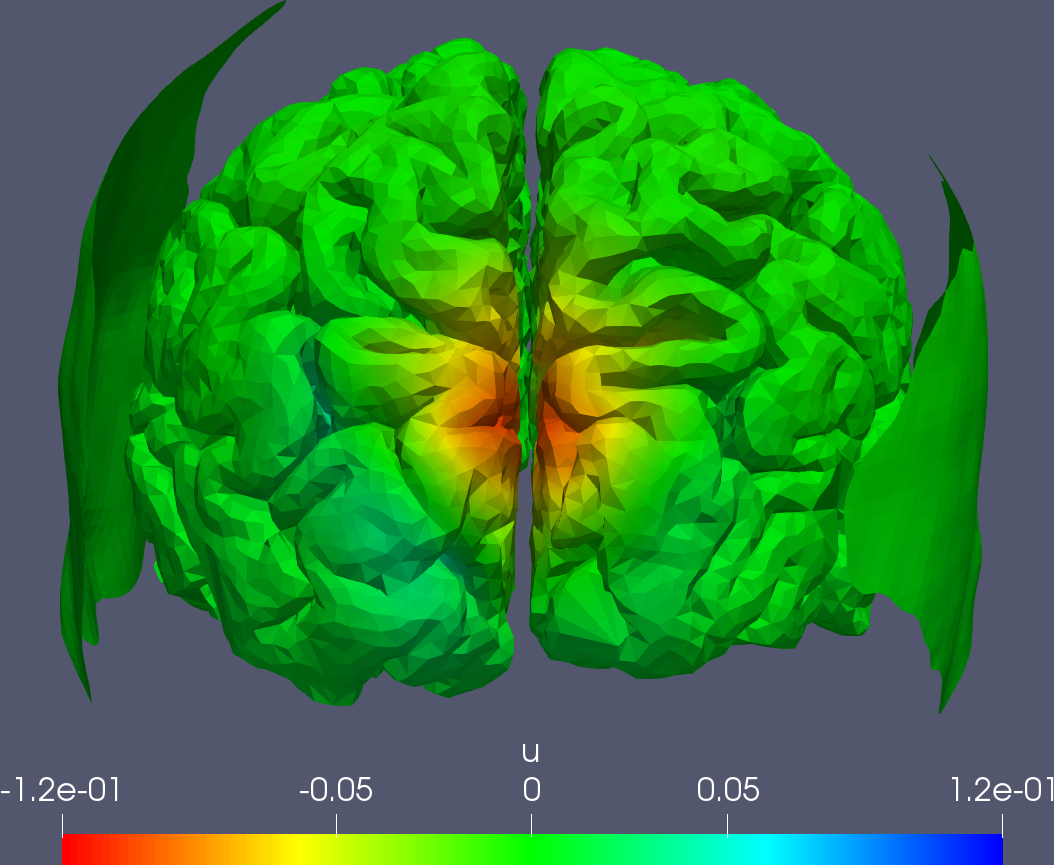} \\
    \small (a)
  \end{tabular}
  \begin{tabular}[b]{c}
    \includegraphics[width=0.45\textwidth]{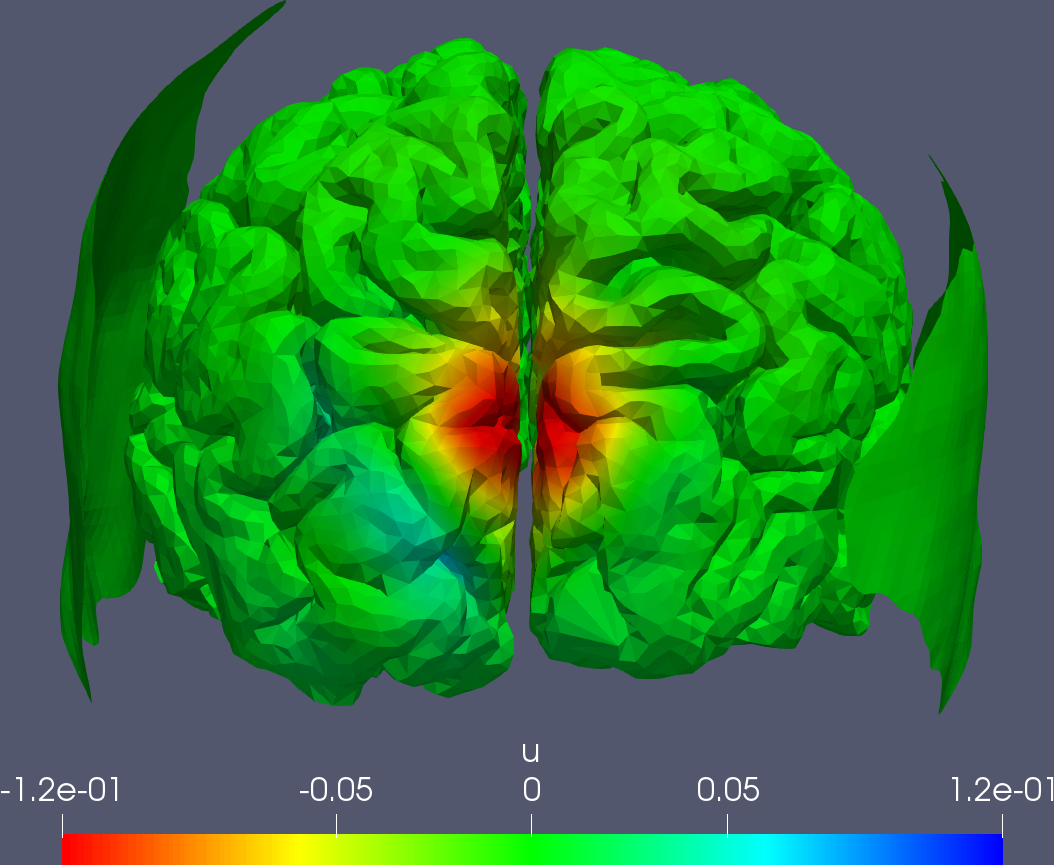}\\
    \small (b)
  \end{tabular}
    \caption{
    Potential $u$ recovered at the cortex. Regularization parameter was set to $\varepsilon=10^{-3}$ and $\varepsilon=10^{-4}$ on panels (a) and (b), respectively.
    }
    \label{fig:result1}
\end{figure*}


\newpage

\section{Conclusion}
\label{sec8}

In this article, we applied OC formalism to the inverse EEG problem. 
The inverse problem is reduced to a system of variational equations giving rise to a system of linear equations. 
This system is a $3 \times 3$ block matrix. 
It is large symmetric indefinite, and sparse.
Thus, estimation of the EEG sources is performed by a single linear solve. 
We presented a numerical test that demonstrates that this approach provides a consistent estimation of cortical activity.
We demonstrated that our algorithm is stable in the presence of noise in the input data.

Our approach fully addresses the critical shortcomings of the mixed QRM: discrete data and noise characteristics are naturally taken into account when solving the inverse problem.
In contrast to the standard methods of the MNE/LORETA type, which estimate the power of individual dipoles, our approach reconstructs the density of the dipole layer located on the cortex. 
On the one hand, this makes our approach more specialized. 
On the other, it reduces the mesh size and eliminates the need to distinguish between gray and white matter in the model because
the area occupied by the brain is excluded from consideration. 
Also, our approach avoids the problem of simulation of the dipolar right-hand side - a persisting issue with all standard FEM-based methods of MNE/LORETA type.
In other respects, our method offers the same flexibility as the FEM-based estimators of MNE/LORETA type, such as the ability to work with anatomical head models of almost arbitrary complexity, non-constant conductivity within head compartments, holes in the interfaces. 
Finally, we note that the software implementation of this approach does not require special efforts since it uses programming blocks that are available in standard FEM packages.

\section*{Acknowledgments}

This research was partially supported by the Russian Science Foundation, project no. 21-11-00139.
The authors acknowledge computational resources granted by 
Complex for Simulation and Data Processing for Mega-Science Facilities at NRC “Kurchatov Institute”, http://ckp.nrcki.ru/.

\clearpage


\newcommand{\newblock}{} 
\bibliographystyle{dcu}
\bibliography{srcinv-eeg-JIIP_v2}

\end{document}